\documentclass{article}

\usepackage{amsmath, amssymb, mathtools, amsfonts}

\usepackage{times}
\usepackage{bm}
\usepackage{natbib}
\usepackage{hyperref}

\usepackage{relsize}

\usepackage{amsthm, thmtools, thm-restate}

\usepackage[linesnumbered,ruled,vlined]{algorithm2e}

\newtheorem{theorem}{Theorem}
\newtheorem{lemma}{Lemma}
\newtheorem{corollary}{Corollary}

\newtheorem{definition}{Definition}
\newtheorem*{remark}{Remark}
\newtheorem*{theoremintro}{Theorem}

\newtheorem{theoremsupp}{Theorem}
\newtheorem{lemmasupp}{Lemma}
\newtheorem{corollarysupp}{Corollary}

\newcommand{\mybrk}{ \\ }

\makeatletter
\newcommand{\p}{\@ifstar{\@pstar}{\@pnostar}}
\newcommand{\@pstar}[1]{\smash{({#1})}}
\newcommand{\@pnostar}[1]{
    \relax\if@display
        \expandafter{\left({#1}\right)}
    \else
        \expandafter{({#1})}
    \fi
}
\makeatother

\makeatletter
\newcommand{\brk}{\@ifstar{\@brkstar}{\@brknostar}}
\newcommand{\@brkstar}[1]{\smash{[{#1}]}}
\newcommand{\@brknostar}[1]{
    \relax\if@display
        \expandafter{\left[{#1}\right]}
    \else
        \expandafter{[{#1}]}
    \fi
}
\makeatother

\makeatletter
\newcommand{\set}{\relax\@ifstar{\@setstar}{\@setnostar}}
\newcommand{\@setstar}[1]{\relax\smash{\{{#1}\}}}
\newcommand{\@setnostar}[1]{
    \relax\if@display
        \expandafter{\left\{{#1}\right\}}
    \else
        \expandafter{\{{#1}\}}
    \fi
}
\makeatother

\makeatletter
\newcommand{\abs}[1]{
    \relax\if@display
        \expandafter{\left|{#1}\right|}
    \else
        \expandafter{|{#1}|}
    \fi
}
\makeatother

\makeatletter
\newcommand{\norm}[2][]{
    \relax\if@display
        \left\|{#2}\right\|_{#1}
    \else
        \|{#2}\|_{#1}
    \fi
}
\makeatother

\newcommand{\nats}{\mathbb N}
\newcommand{\reals}{\mathbb R}

\newcommand{\iid}{\overset{\mathrm{iid}}{\sim}}

\DeclareMathOperator{\rank}{rank}
\DeclareMathOperator{\nullity}{Nullity}
\DeclareMathOperator{\tr}{tr}

\newcommand{\ELPS}{{\mskip -1mu ... \mskip 2mu}}

\newcommand{\SYM}{\mathcal{S}}
\newcommand{\PP}{\mathbb{P}^{\mathsmaller{+}}}
\newcommand{\PPP}{\mathbb{P}^{\mathsmaller{+}{\mskip -3mu}\mathsmaller{+}}}
\newcommand{\DZ}{\mathcal{D}^{\mathsmaller{0}}}
\newcommand{\DP}{\mathcal{D}^{\mathsmaller{+}}}
\newcommand{\DPP}{\mathcal{D}^{\mathsmaller{+}{\mskip -3mu}\mathsmaller{+}}}

\newcommand{\SYMG}[1][]{\mathcal{S}_{\mathcal{G}_{#1}}}
\newcommand{\PGP}[1][]{\mathbb{P}_{{\mskip -1.5mu}\mathcal{G}_{#1}}^{\mathsmaller{+}}}
\newcommand{\PGPP}[1][]{\mathbb{P}_{{\mskip -1.5mu}\mathcal{G}_{#1}}^{\mathsmaller{+}{\mskip -3mu}\mathsmaller{+}}}
\newcommand{\DGZ}[1][]{\mathcal{D}_{{\mskip -.8mu}\mathcal{G}_{#1}}^{{\mskip 1.2mu}\mathsmaller{0}}}
\newcommand{\DGP}[1][]{\mathcal{D}_{{\mskip -.8mu}\mathcal{G}_{#1}}^{\mathsmaller{+}}}
\newcommand{\DGPP}[1][]{\mathcal{D}_{{\mskip -.8mu}\mathcal{G}_{#1}}^{\mathsmaller{+}{\mskip -3mu}\mathsmaller{+}}}
\newcommand{\RGP}[1][]{\mathcal{R}_{{\mskip -.8mu}\mathcal{G}_{#1}}^{\mathsmaller{+}}}
\newcommand{\RGPP}[1][]{\mathcal{R}_{{\mskip -.8mu}\mathcal{G}_{#1}}^{\mathsmaller{+}{\mskip -3mu}\mathsmaller{+}}}

\newcommand{\KSYMG}[2][]{K_{#2}{\mskip 1mu}{\cap}{\mskip 2.7mu}\SYMG[#1]}
\newcommand{\KPGP}[2][]{K_{#2}{\mskip 1mu}{\cap}{\mskip 2.7mu}\PGP[#1]}
\newcommand{\KDGZ}[2][]{K_{#2}{\mskip 1mu}{\cap}{\mskip 1.8mu}\DGZ[#1]}
\newcommand{\KDGP}[2][]{K_{#2}{\mskip 1mu}{\cap}{\mskip 1.8mu}\DGP[#1]}
\newcommand{\KRGP}[2][]{K_{#2}{\mskip 1mu}{\cap}{\mskip 1.8mu}\RGP[#1]}

\newcommand{\SUCCEQ}{{\mskip 2mu}{\succeq}{\mskip 2mu}}
\newcommand{\TIMES}{{\mskip 1mu}{\times}{\mskip 1mu}}
\newcommand{\IN}{{\mskip 3mu}{\in}{\mskip 3mu}}
\newcommand{\LOG}{\log{\mskip -2mu}}
\newcommand{\RANK}{\rank{\mskip -2mu}}
\newcommand{\TR}{\tr{\mskip -2mu}}
\newcommand{\DIM}{\dim{\mskip -2mu}}

\newcommand{\MIN}{\min{\mskip -2mu}}

\DeclareMathOperator{\con}{C}
\DeclareMathOperator{\gaus}{G}
\DeclareMathOperator{\spl}{H}

\renewcommand{\cite}{\citep}

\title{Pseudo-likelihood Estimators for Graphical Models: Existence and Uniqueness}
\author{Benjamin Roycraft and Bala Rajaratnam}
\date{\today}

\begin{document}

\maketitle

\begin{abstract}
    Graphical and sparse (inverse) covariance models have found widespread use in modern sample-starved high dimensional applications. A part of their wide appeal stems from the significantly low sample sizes required for the existence of estimators, especially in comparison with the classical full covariance model. For undirected Gaussian graphical models, the minimum sample size required for the existence of maximum likelihood estimators had been an open question for almost half a century, and has been recently settled \citep{BenDavid2015, uhler_geometry_2012, gross_maximum_2018}. The very same question for pseudo-likelihood estimators has remained unsolved ever since their introduction in the '70s. Pseudo-likelihood estimators have recently received renewed attention as they impose fewer restrictive assumptions and have better computational tractability, improved statistical performance, and appropriateness in modern high dimensional applications, thus renewing interest in this longstanding problem. In this paper, we undertake a comprehensive study of this open problem within the context of the two classes of pseudo-likelihood methods proposed in the literature. We provide a precise answer to this question for both pseudo-likelihood approaches and relate the corresponding solutions to their Gaussian counterpart.
\end{abstract}

\section{Introduction}

Graphical and sparse (inverse) covariance models have become a staple in modern statistics and machine learning \citep{lauritzen_graphical_1996}. As statistical models, they enjoy many properties which are attractive in the context of modern high-dimensional sample-starved applications. One such property is the low sample size requirements for the existence of maximum likelihood estimators, facilitating estimation when the features vastly outnumber the available samples. As a concrete example, consider the classical inverse covariance estimation problem for samples $X_1,\ELPS, X_n \iid \mathcal{N} \p{0,\Sigma_{p \TIMES p}}$. It is well known that for the \textsc{mle} of $\Sigma^{-1}$ to exist, a.s. $n\geq p$. Thus, in modern high-dimensional sample-starved settings where the number of covariates $p$ is much larger than the sample size $n$, full covariance models are often not very useful. On the contrary, if the sparsity pattern in the inverse covariance matrix corresponds to a chordal/decomposable graph, the sample size required is only the size of the maximal clique \citep{lauritzen_graphical_1996}, rendering these models highly applicable in modern high-dimensional regimes.

Ever since their introduction \citep{dempsterCovarianceSelection1972}, the precise sample size which guarantees the existence of the \textsc{mle} for undirected Gaussian graphical models has attracted much interest from the statistics community \citep{lauritzen_mixed_1989, 10.2307/4616281, BenDavid2015, bernstein_maximum_2022, bernstein_computing_2022, gross_maximum_2018, blekherman_maximum_2019, uhler_geometry_2012} and had been an open problem for almost half a century. A comprehensive treatment for undirected Gaussian graphical models was recently provided by \citep{BenDavid2015}, who successfully demonstrated that the minimum sample size required is bounded above by the graph degeneracy + 1 and below by the subgraph connectivity + 1. Likewise \citep{gross_maximum_2018, blekherman_maximum_2019} establish an improved upper bound based on the generic completion rank. The very same question on the existence of pseudo-likelihood estimators has remained unsolved since their introduction in the '70s \citep{besagStatisticalAnalysisNonLattice1975}. Pseudo-likelihood estimators have recently received renewed attention as they impose fewer restrictive assumptions and have better computational tractability, improved statistical performance, and appropriateness in modern high dimensional applications, especially in comparison with their Gaussian counterpart (see \cite{10.2307/24775310} and the references therein). Recently, two major classes of pseudo-likelihood methods have been proposed (see \cite{10.2307/24775310, lee_learning_2015}) and have been shown to perform well in the modern high-dimensional setting. These have been popularly referred to as the ``\textsc{concord}'' and ``\textsc{space}'' estimators (for reasons that will become evident, we shall work with the convex version of the latter, herein referred to as ``\textsc{conspace}''). Despite the efficacy of these estimators, very little is known about the precise conditions required for their existence - which is the primary motivation of our work. In particular, we consider the two classes of pseudo-likelihood estimators above and comprehensively analyze the sample size regimes in which they are guaranteed to exist.

We now summarize the main results of the paper. For a given graph $\mathcal{G}$, let $\delta$ be the graph degeneracy, $\kappa^*$ the subgraph connectivity, and $\ell$ the generic completion rank. Let $n_{\textsc{concord}}$ and $n_{\textsc{conspace}}$ denote the minimum sample sizes, equivalently the rank of sample covariance matrix, required for existence and uniqueness of the \textsc{concord} and \textsc{conspace} pseudo-likelihood graphical model estimators. Our first result precisely quantifies the minimum sample size requirements:
\begin{theoremintro}
 $\kappa^*+1 \leq n_{\textsc{concord}} = n_{\textsc{conspace}}\leq \ell \leq \delta+1$.
\end{theoremintro}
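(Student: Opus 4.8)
The plan is to convert each pseudo-likelihood into an exact, linear-algebraic criterion for existence and uniqueness of its estimator, and then to read off the four inequalities from that single criterion. First I would regard the \textsc{concord} and \textsc{conspace} objectives as proper closed convex functions on the cone $\DGP$ of symmetric $\mathcal G$-patterned matrices with strictly positive diagonal, compute their recession functions, and invoke the standard fact that such a function attains its infimum at a unique point if and only if it admits no nonzero direction of recession. A short computation --- carried out separately for the two loss functions --- shows that the quadratic (respectively quotient) part of either objective stays bounded along a direction $D\in\SYMG$ only when $\tr(DSD)=0$, hence (as $S\succeq 0$) only when $SD=0$, and that feasibility of a ray in direction $D$ already forces $\mathrm{diag}(D)\ge 0$, after which such a $D$ obstructs existence when $\mathrm{diag}(D)\ne 0$ and obstructs uniqueness when $\mathrm{diag}(D)=0$. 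Thus both estimators exist and are unique if and only if there is no nonzero $D\in\SYMG$ with $SD=0$ and $\mathrm{diag}(D)\ge 0$. Since this criterion is identical for the two estimators, depends on $S$ only through $\ker S$, and is monotone as $\ker S$ shrinks, the threshold rank is well defined and $n_{\textsc{concord}}=n_{\textsc{conspace}}$; write $n_\star$ for the common value. This reduction underlies everything that follows.

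For the lower bound $\kappa^*+1\le n_\star$ I would note that any nonzero positive semidefinite $K\in\SYMG$ has $\mathrm{diag}(K)\ge 0$ and is therefore an admissible obstruction above. By the standard conic-duality description of the Gaussian graphical model --- partial matrices completable to positive definiteness form the cone dual to $\overline{\PGP}$ (the closure of $\PGP$, i.e.\ the PSD matrices with pattern $\mathcal G$) --- failure of the Gaussian \textsc{mle} at $S$ is witnessed by exactly such a nonzero $K$ with $SK=0$, and this $K$ is then also a certificate that \textsc{concord} and \textsc{conspace} fail at $S$. Hence the failure set for the Gaussian \textsc{mle} is contained in that for the pseudo-likelihood estimators, so $n_\star\ge n_{\textsc{mle}}$, and the classical bound $n_{\textsc{mle}}\ge\kappa^*+1$ of \cite{BenDavid2015} finishes this inequality. (Equivalently, one may transplant Ben--David's explicit subgraph obstruction directly into the criterion.)

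The inequality $n_\star\le\ell$ is where I expect the real work to lie, because $\ell$ was introduced as a threshold for the \emph{Gaussian} problem and must be shown still to control the pseudo-likelihood criterion, whose obstructions are only required to be PSD on the diagonal rather than genuinely PSD. For generic $S$ of rank $r$, $\ker S$ is a generic $(p-r)$-dimensional subspace $U$, and a nonzero admissible $D$ is a nonzero $\mathcal G$-patterned symmetric matrix with column space contained in $U$. I would (i) show that at the relevant rank the weaker sign constraint produces no new obstructions --- a PSD-diagonal obstruction can be upgraded to, or already forces, a PSD/PSD-completion obstruction --- and (ii) identify the non-existence of such a matrix for generic $U$ with the defining property of the generic completion rank: a generic $\mathcal G$-patterned partial symmetric matrix of the appropriate rank admits a completion of that rank \cite{gross_maximum_2018, blekherman_maximum_2019}. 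The delicate points are matching the pseudo-likelihood obstruction to the completion certificate, and handling the genericity of $U$: the family of subspaces of the form $U\otimes U$ arising this way is special in the Grassmannian, so the transversality with the coordinate subspace $\SYMG$ that the dimension count would need has to be argued rather than assumed. This identification is the technical heart of the proof.

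Finally $\ell\le\delta+1$ is purely graph-theoretic and may be cited from \cite{gross_maximum_2018}; its one-line reason is that, listing the vertices in a degeneracy ordering so that each vertex has at most $\delta$ neighbours preceding it, a generic $\mathcal G$-partial matrix can be completed vertex by vertex while its rank never exceeds $\delta+1$. Chaining the four pieces gives $\kappa^*+1\le n_{\textsc{concord}}=n_{\textsc{conspace}}=n_\star\le\ell\le\delta+1$, and the main obstacle is the identification described in the third paragraph.
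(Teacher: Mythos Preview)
Your recession analysis reproduces the paper's Lemma~\ref{lemma::general}, and your treatment of $n_{\textsc{concord}}=n_{\textsc{conspace}}$, of the lower bound via $\PGP\subset\DGP$ (whence $\gamma\le n_\star$ and then $\kappa^*+1\le\gamma$ from \cite{BenDavid2015}), and of $\ell\le\delta+1$ all align with the paper's arguments.

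The gap is in your route to $n_\star\le\ell$. You propose first to upgrade a diagonal-nonnegative obstruction $D\in K_S\cap\DGP$ to a positive semidefinite one, and only then to rule the latter out via the generic completion rank. But this intermediate step is both unproven and unnecessary. The characterization of $\ell$ that the paper actually invokes (Theorem~6.3 of \cite{gross_maximum_2018}, in the proof of Theorem~\ref{theorem::boundsupper}) is that for generic $A\succeq 0$ of rank $r\ge\ell$ the \emph{entire} intersection $K_A\cap\SYMG$ vanishes, with no sign condition on the obstruction whatsoever. Since $\DGP\subset\SYMG$, this immediately yields $K_A\cap\DGP=\{0\}$, which by Lemma~\ref{lemma::general} is exactly existence and uniqueness of the pseudo-likelihood minimizer; $n_\star\le\ell$ then follows in one line. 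Your step~(i), if it could be carried out, would in fact prove the strictly stronger statement $n_\star=\gamma$ (combine it with your own lower-bound argument), which the paper does not claim in general and which the graphs with $\gamma<\ell$ exhibited in \cite{blekherman_maximum_2019} would make a genuinely new result. The ``technical heart'' you anticipate is therefore a detour: $\ell$ already controls the larger, unconstrained set $K_A\cap\SYMG$, which dominates both the PSD and the diagonal-nonnegative obstruction cones, so no comparison between those two cones is needed for the stated theorem.
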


Although the functional forms of the \textsc{concord} and \textsc{conspace} pseudo-likelihoods are different, surprisingly both estimators are identical in terms of sample size requirements. Moreover, and despite the different formulations of the pseudo-likelihood and Gaussian likelihood estimation problems, the above bounds match the best known bounds for the Gaussian case \citep{gross_maximum_2018, BenDavid2015, uhler_geometry_2012}. This equivalence raises a natural question of comparison between the necessary sample sizes for the pseudo-likelihood ($n_{\text{pseudo}}\coloneqq n_{\textsc{concord}}=n_{\textsc{conspace}}$) and Gaussian likelihood ($n_{\mathrm{Gaussian}}$) methods, which is answered by our second result:
\begin{theoremintro}
 $n_{\mathrm{Gaussian}} \leq n_{\mathrm{pseudo}}$.
\end{theoremintro}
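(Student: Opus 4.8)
The plan is to establish the inequality through a pointwise, contrapositive statement: for \emph{every} sample covariance matrix $S \succeq 0$, if the Gaussian maximum likelihood estimator fails to exist at $S$, then the \textsc{concord} objective is unbounded below at $S$, so the \textsc{concord} estimator also fails to exist; via the identity $n_{\textsc{concord}} = n_{\textsc{conspace}}$ from the first result this transfers to \textsc{conspace}, and then a threshold comparison yields $n_{\mathrm{Gaussian}} \le n_{\mathrm{pseudo}}$. A proof by bound-chasing is not available here: by the first result the pseudo-likelihood bounds $\kappa^{*}+1 \le n_{\mathrm{pseudo}} \le \ell$ coincide with the sharpest known bounds for the Gaussian model, so only a direct comparison of the exact existence conditions can separate the two.

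First I would recall the classical existence criterion for the Gaussian graphical \textsc{mle} (see \citep{lauritzen_graphical_1996, BenDavid2015, uhler_geometry_2012}): the \textsc{mle} of $\Sigma^{-1}$ fails to exist at $S$ if and only if there is a nonzero positive semidefinite matrix $D$ supported on $\mathcal{G}$ (that is, $D_{ij}=0$ whenever $i \ne j$ and $\{i,j\}$ is not an edge of $\mathcal{G}$) with $\tr(SD)=0$. Since $S,D\succeq 0$ and $\tr(SD)=\tr(S^{1/2}DS^{1/2})$ is the trace of a positive semidefinite matrix, $\tr(SD)=0$ forces $S^{1/2}DS^{1/2}=0$, hence $S^{1/2}D=0$ and $SD=0$. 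So the failure of the Gaussian \textsc{mle} produces a nonzero positive semidefinite $\mathcal{G}$-supported $D$ annihilated by $S$.

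The heart of the argument is then that such a $D$ is automatically a ``bad'' recession direction for the \textsc{concord} objective $-\sum_{i=1}^{p}\log\omega_{ii}+\tfrac12\tr(\Omega S\Omega)$ (taken, as usual, up to the positive factor $n$, and minimized over symmetric $\Omega$ supported on $\mathcal{G}$ with positive diagonal). Indeed, being positive semidefinite and nonzero, $D$ has nonnegative diagonal with at least one strictly positive entry (a positive semidefinite matrix with zero diagonal is the zero matrix). Fix any feasible $\Omega_0$, say $\Omega_0 = I$; then $\Omega_0 + tD$ is feasible for all $t \ge 0$, since its diagonal is $\omega_{0,ii} + t d_{ii} \ge \omega_{0,ii} > 0$. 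Along this ray the quadratic term is frozen, $\tr((\Omega_0+tD)S(\Omega_0+tD)) = \tr(\Omega_0 S\Omega_0)$, because $SD=0$; meanwhile $-\sum_i \log(\omega_{0,ii}+t d_{ii}) \to -\infty$ as $t \to \infty$, since the product of these positive numbers grows without bound (at least one factor is linear in $t$ and the rest are bounded below). Hence the \textsc{concord} objective tends to $-\infty$ along $\Omega_0 + tD$: it is unbounded below, so it has no minimizer and the \textsc{concord} estimator does not exist at $S$.

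Putting the pieces together gives, for every $S$, the implication ``the \textsc{concord} estimator exists at $S$'' $\Rightarrow$ ``the Gaussian \textsc{mle} exists at $S$''. Since $n_{\textsc{concord}}=n_{\textsc{conspace}}=n_{\mathrm{pseudo}}$ by the first result, and since each of $n_{\mathrm{Gaussian}}$ and $n_{\mathrm{pseudo}}$ is the least rank of $S$ (equivalently sample size, almost surely over i.i.d.\ continuous data) for which the corresponding estimator exists, any sample size at which the pseudo-likelihood estimator exists is one at which the Gaussian \textsc{mle} exists, i.e. $n_{\mathrm{Gaussian}} \le n_{\mathrm{pseudo}}$. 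I do not expect a real obstacle: the bulk of the paper's work is in the first result, and here one only needs (i) the classical Gaussian criterion in the ``annihilating positive semidefinite matrix'' form, and (ii) the observation that any such matrix, by positive semidefiniteness, supplies a feasible ray along which the \textsc{concord} log-barrier drops to $-\infty$ while its quadratic part stays constant. The only care required is to exhibit non-existence for just one of the two pseudo-likelihood objectives (\textsc{concord}) and transfer it to the other through the equivalence in the first result, so that \textsc{conspace}'s partial-correlation parametrization never needs to be examined.
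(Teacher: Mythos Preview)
Your argument is correct and is essentially the paper's own proof. In the paper's language, the Gaussian \textsc{mle} fails at $A$ exactly when $\KPGP{A}\neq\{0\}$ (Lemma~\ref{lemma::general_2}), while the pseudo-likelihood minimizer exists and is unique exactly when $\KDGP{A}=\{0\}$ (Lemma~\ref{lemma::general}); since $\PP\subset\DP$, one gets $\KPGP{A}\subset\KDGP{A}$, and hence pseudo-likelihood existence implies Gaussian existence. Your proof simply unpacks this set inclusion concretely: a nonzero $D\in\KPGP{S}$ has nonnegative diagonal with some entry strictly positive, so it lies in $\KDGP{S}\setminus\KDGZ{S}$, and you then exhibit directly the ray $\Omega_0+tD$ along which $\con_S\to-\infty$ rather than invoking the abstract criterion of Lemma~\ref{lemma::general}. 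The substance and the key observation (positive semidefinite $\Rightarrow$ nonnegative diagonal) are identical.
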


This result indicates that the minimum sample size required in the Gaussian setting is the same or fewer than for the pseudo-likelihood approach, giving the impression that the latter is inferior. However, as will be seen in this work, the sample size requirements are exactly the same for commonly studied classes of graphs covered in the literature. The two preceding results are conveniently summarized in the following:
\begin{theoremintro}
 $\kappa^*+1 \leq n_{\mathrm{Gaussian}} \leq n_{\mathrm{pseudo}}\leq \ell \leq \delta+1.$
\end{theoremintro}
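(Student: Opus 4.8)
The final statement is not a new result but a convenient consolidation of the two theorems immediately preceding it, together with the classical bounds for the Gaussian maximum likelihood estimator, so the plan is simply to concatenate the relevant inequalities in the correct order. No genuinely new estimate is required.

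Concretely, I would proceed in three steps. First, from the definition $n_{\mathrm{pseudo}} \coloneqq n_{\textsc{concord}} = n_{\textsc{conspace}}$ together with the first theorem, one immediately obtains
\[
 \kappa^{*} + 1 \;\leq\; n_{\mathrm{pseudo}} \;\leq\; \ell \;\leq\; \delta + 1 ,
\]
which already supplies everything in the claimed chain from $n_{\mathrm{pseudo}}$ rightward. Second, the second theorem gives $n_{\mathrm{Gaussian}} \leq n_{\mathrm{pseudo}}$, so $n_{\mathrm{Gaussian}}$ slots in immediately to the left of $n_{\mathrm{pseudo}}$. Third --- and this is the only ingredient not established in the present paper --- I would invoke the classical subgraph-connectivity lower bound for the Gaussian case from \cite{BenDavid2015} (equivalently, the existence of a rank-$\kappa^{*}$ sample covariance admitting no graphical Gaussian MLE), namely $\kappa^{*} + 1 \leq n_{\mathrm{Gaussian}}$. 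Splicing these three facts together yields
\[
 \kappa^{*} + 1 \;\leq\; n_{\mathrm{Gaussian}} \;\leq\; n_{\mathrm{pseudo}} \;\leq\; \ell \;\leq\; \delta + 1 ,
\]
which is the assertion.

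Since the argument is pure bookkeeping, there is no real obstacle at this stage; the substance lies entirely in the two preceding theorems --- in particular in the upper bound $n_{\mathrm{pseudo}} \leq \ell$ via the generic completion rank and in the comparison $n_{\mathrm{Gaussian}} \leq n_{\mathrm{pseudo}}$ between the two estimation paradigms. The one point I would take care to check is that the lower endpoint is consistent: the pseudo-likelihood lower bound $\kappa^{*}+1$ from the first theorem and the Gaussian lower bound $\kappa^{*}+1$ from \cite{BenDavid2015} are literally the same quantity, so inserting $n_{\mathrm{Gaussian}}$ between $\kappa^{*}+1$ and $n_{\mathrm{pseudo}}$ leaves both ends of the chain intact rather than weakening either bound.
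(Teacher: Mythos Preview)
Your proposal is correct and matches the paper's own argument essentially verbatim: the paper likewise treats this statement as pure bookkeeping, combining Theorems~\ref{theorem::boundsupper} and~\ref{theorem::boundslower} (the formal versions of the two preceding introductory theorems) and then invoking \citep[Theorem~4.2]{BenDavid2015} for the remaining inequality $\kappa^{*}+1 \leq \gamma = n_{\mathrm{Gaussian}}$.
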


As well as solving the sample size problem, we show that, like their Gaussian counterparts, pseudo-likelihood estimators exhibit an important monotonicity property: a sub-graph of an undirected graphical model requires fewer samples - rigorously demonstrating the beneficial effect of sparsity on pseudo-likelihood based graphical model estimation. 

\defcitealias{RoycraftSupp2023}{Supplement}

This note is organized as follows: Section~\ref{section::prelims} introduces notation and reviews relevant previous work. Section~\ref{section::general} provides general theorems with regards to the existence of both (Gaussian) likelihood and pseudo-likelihood estimators. Section~\ref{section::ranks} specifies the main results of the paper regarding the minimum sample size required for existence of pseudo-likelihood estimators. Proofs of all main results may be found in Appendix~\ref{section::proofs}. Appendix~\ref{section::exact_compute} discusses how the theoretical criterion of Section~\ref{section::general} can be leveraged computationally to further improve the upper bounds presented in Section~\ref{section::ranks}.

\section{Preliminaries, Notation, and Literature Review} \label{section::prelims}

Let a given sample of size $n$ of $p$ variables be represented by the data matrix $\mathbf{X} = \brk{x_{1\cdot }, \ELPS, x_{n\cdot}}^{\mathsmaller{\top}} = \brk{x_{\cdot 1}, \ELPS, x_{\cdot p}}$. Assume each observation has mean $0$ and covariance matrix $\Sigma \succ 0$. In this work we consider the problem of estimating the inverse covariance matrix $\Sigma^{-1}$ under graphical (i.e. sparsity) constraints, focusing on two classes of pseudo-likelihood approaches and the corresponding sample size required for each.

We now provide is a comprehensive list of the common notation and concepts we will use throughout the paper. Let $S \coloneqq \frac1n\mathbf{X}^{\mathsmaller{\top}}\mathbf{X}$ be the sample covariance matrix under a mean-zero assumption. $\mathcal{G} \equiv \p{V, E}$ denotes a graph with vertices $V = V\p{\mathcal{G}} = \set{1, \ELPS, p}$ and edges $E = E\p{\mathcal{G}} \subseteq \set{\set{i, j}\colon i,j \IN V, \ i \neq j}$. We say that a symmetric matrix $\Omega$ satisfies the graphical/sparsity constraints associated with $\mathcal{G}$ if $\set{i, j} \notin E \implies \Omega_{ij} = \Omega_{ji} = 0$. Let $\Omega_{ij}$ denote the individual elements of $\Omega$ and $\Omega_{\cdot 1}, \ELPS, \Omega_{\cdot p}$ the column vectors. Also, $\Omega_{\mathsmaller{D}}$ denotes the diagonal matrix such that $\p{\Omega_D}_{ii}=\Omega_{ii}$.

A symmetric matrix $A$ is said to be in \textit{general position} if every $q \TIMES q$ principal submatrix of $A$ has rank $\MIN\set{q, \RANK\p{A}}$. Under general conditions, including when $\mathbf{X}$ is absolutely continuous in the multivariate sense, the $S = \frac{1}{n}\mathbf{X}^{\mathsmaller{\top}}\mathbf{X}$ is in general position almost surely.

Denote by $\mathcal{M}$ the space of real $p \TIMES p$ matrices; likewise $\SYM\coloneqq\set{\Omega\IN \mathcal{M}\colon \Omega = \Omega^{\mathsmaller{\top}}}$ is the corresponding space of symmetric matrices. $\PP \coloneqq \set{\Omega \IN \SYM\colon \Omega \SUCCEQ 0}$ and $\PPP \coloneqq \set{\Omega \IN \SYM\colon \Omega \succ 0}$ are the positive semidefinite and definite matrices, respectively. Similarly $\DP \coloneqq \set{\Omega \IN \SYM\colon \Omega_{ii} \geq 0 \ \forall i=1,\ELPS, p}$, $\DPP \coloneqq \set{\Omega \IN \SYM\colon \Omega_{ii} > 0 \ \forall i=1,\ELPS, p}$, and $\DZ \coloneqq \set{\Omega \IN \SYM\colon \Omega_{ii} = 0 \ \forall i=1,\ELPS, p}$. For a given graph $\mathcal{G}$, $\SYMG \coloneqq \mybrk \set{\Omega \IN \SYM\colon \Omega_{ij} = \Omega_{ji} = 0 \ \forall \set{i,j} \notin E\p{\mathcal{G}}}$ is the subset of symmetric matrices which satisfy the graphical constraints induced by $\mathcal{G}$. Let $\DGZ \coloneqq \DZ \cap \SYMG$. Note that $\SYM, \SYMG$, and $\DGZ$ constitute linear subspaces within $\mathcal{M}$. Finally, define the cones $\PGP \coloneqq \SYMG \cap \PP$, $\PGPP \coloneqq \SYMG \cap \PPP$, $\DGP \coloneqq \SYMG \cap \DP$, and $\DGPP \coloneqq \SYMG \cap \DPP$; each is closed and convex.

Two major classes of pseudo-likelihood approaches have been proposed, and are popularly referred to as ``\textsc{space}'' and ``\textsc{concord}''. The first method, \textsc{space} \citep{Peng2009}, is equivalent \citep[Lemma~1]{10.2307/24775310} to minimization of the following pseudo-likelihood:
\begin{equation} \label{spaceobjective}
 -\frac{n}{2}\sum_{i=1}^p\LOG\p{\Omega_{ii}} + \sum_{i=1}^p\frac{w_i}{\Omega_{ii}^2}\Bigg\lVert\Omega_{ii}x_{\cdot i} - \sum_{j\neq i}\Omega_{ij} x_{\cdot j}\Bigg\rVert^2 
\end{equation}
where the weights $w_i$ can either take the value $w_i = \Omega_{ii}$ (``partial variance weights'') or $w_i = 1$ (``uniform weights''). Three other methods proposed in the literature, namely \textsc{symlasso} \citep{Friedman2010ApplicationsOT}, \textsc{splice} \citep{Rocha2008}, and the method of \cite{lee_learning_2015} are, up to reparametrization, equivalent to \textsc{space} with partial variance weights (see \cite{10.2307/24775310}). Thus, these four seemingly different methods can be treated in a unified manner. Since the parameterization of \cite{lee_learning_2015} is the only one of the four to yield a convex objective, we shall focus our analyses on this version, referring to it as ``\textsc{conspace}'' (to emphasize that it corresponds to a convex reformulation of the \textsc{space} method).

As mentioned above, \cite{Peng2009} also proposes to use uniform weights, i.e. $w_i = 1$. However, a cursory analysis shows that \eqref{spaceobjective} is never bounded below in this case, thus no minimizer exists: Along any ray in $\DGP$ (including in the direction of the identity), the first term is negative and unbounded, whereas the second term remains constant. Thus, uniform weights do not lead to a meaningful estimate, and as such we do not consider this case in our subsequent analyses.

The second major method, \textsc{concord} \citep{10.2307/24775310}, corresponds to the case of weights $w_i = \Omega_{ii}^2$, yielding a convex pseudo-likelihood objective. The \textsc{concord} objective has a different functional form than that of \textsc{conspace}, even under reparametrization, and has been shown to yield stable and reliable graphical model estimates.

In what follows, we consider the problem of minimizing the objectives corresponding to \textsc{concord} and \textsc{conspace}, subject to graphical constraints. Recall the specific \textsc{concord} objective:
\begin{align}
 & -\frac{n}{2}\sum_{i=1}^p\LOG\p{\Omega_{ii}^2} + \sum_{i=1}^p\Omega_{ii}^2\Bigg\lVert x_{\cdot i} + \sum_{j\neq i}\frac{\Omega_{ij}}{\Omega_{ii}} x_{\cdot j}\Bigg\rVert^2 \nonumber \\
 = \ & n\p{-\sum_{i=1}^p\LOG\p{\Omega_{ii}^2} + \sum_{i=1}^p \Omega_{\cdot i}^{\mathsmaller{\top}}S\Omega_{\cdot i}} \label{eq::1}.
\end{align}
The \textsc{concord} estimate following the graphical constraints of $\mathcal{G}$ is then the minimizer of \eqref{eq::1} on $\DGPP$. This objective is convex on $\DGPP$, but may not be strictly convex. As benefits later analyses, the form of \eqref{eq::1} allows explicit dependence of the objective on the sample $\mathbf{X}$ to be suppressed. In particular, define the generalized \textsc{concord} objective as follows, dependent on $A \SUCCEQ 0$:
\begin{align}
 \con_{A}\p{\Omega}\coloneqq{} & -2\sum_{i=1}^p\LOG\p{\Omega_{ii}} + \sum_{i=1}^p \Omega_{\cdot i}^{\mathsmaller{\top}}A\Omega_{\cdot i} \nonumber \\
 ={} & -2\LOG\p{\abs{\Omega_D}} + \tr\p{\Omega^{\mathsmaller{\top}}A\Omega}. \nonumber
\end{align}

This generalized objective is convex for any $A \SUCCEQ 0$. The \textsc{concord} estimator is alternatively the minimizer of $\con_{S}$ on $\DGPP$. Similarly, the generalized \textsc{conspace} objective is
\begin{align}
 \spl_{A}\p{\Omega}\coloneqq{} & -2\sum_{i=1}^p\LOG\p{\Omega_{ii}} + \sum_{i=1}^p \frac{1}{\Omega_{ii}}\Omega_{\cdot i}^{\mathsmaller{\top}}A\Omega_{\cdot i} \nonumber \\
 ={} & -2\LOG\p{\abs{\Omega_D}} + \tr\p{\Omega_D^{-1}\Omega^{\mathsmaller{\top}}A\Omega}. \nonumber
\end{align}

This objective is also convex, but not necessarily strictly convex, and minimizing $\spl_{S}$ over $\DGPP$ gives the graph-constrained \textsc{conspace} estimator. By divorcing the two objectives above from explicit dependence on the sample covariance matrix, we may analyze the optimization problems holistically, depending only on summary properties of $A$. This abstraction is important, as our results can be stated without invoking any particular sampling distribution on $\mathbf{X}$.

Later we compare our results to Gaussian maximum likelihood estimation. For $A \SUCCEQ 0$ define
\begin{equation}
	\gaus_{A}\p{\Omega}\coloneqq- \LOG\abs{\Omega}+ \tr\p{A\Omega}. \nonumber
\end{equation}

Subject to the graphical constraints implied by $\mathcal{G}$, the Gaussian \textsc{mle} is the minimizer on $\PGPP$ of $\frac{n}{2}\p{p\LOG\p{2\pi} - \LOG\abs{\Omega} + \sum_{i=1}^{n} x_{\cdot i}^{\mathsmaller{\top}}\Omega x_{\cdot i}}$, which equivalently minimizes $\gaus_{S}$.

\section{General Characterization of Solutions} \label{section::general}

First we establish general abstract conditions under which the pseudo-likelihood and Gaussian \textsc{mle} exist and are unique. For $A \SUCCEQ 0$, define $K_{A} \coloneqq \set{\Omega \IN \mathcal{M}\colon A \Omega = 0}$.

\begin{lemma} \label{lemma::general}
	Given $A \SUCCEQ 0$, consider either pseudo-likelihood objective $\con_{A}$ or $\spl_{A}$. Then a corresponding minimizer exists on $\DGPP$ if and only if
	\begin{equation}
		\KDGP{A} = \KDGZ{A}. \nonumber
	\end{equation}
	
	Furthermore, if a minimizer $\Omega^* \IN \DGPP$ exists, the set of minimizers is affine of the form
	\begin{equation}
		\set{\Omega^*+\Phi\colon \Phi \IN \KDGZ{A}}. \nonumber
	\end{equation}
\end{lemma}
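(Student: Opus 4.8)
The key is to understand the behavior of the pseudo-likelihood objectives along rays and in the recession directions of $\DGPP$. Write $f_A$ for either $\con_A$ or $\spl_A$. I would proceed in three stages: first analyze the $\log$-barrier term and the quadratic term separately, then combine them to get a coercivity/boundedness dichotomy, and finally identify the minimizing set.

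\emph{Stage 1: Decompose the objective and locate the degeneracy.} For $\Omega \IN \DGPP$, write $\Omega = \Omega_D + \Omega_0$ where $\Omega_0 \IN \DGZ$ is the off-diagonal part. The term $-2\LOG\abs{\Omega_D} = -2\sum_i \LOG\p{\Omega_{ii}}$ depends only on the diagonal and blows up to $+\infty$ as any $\Omega_{ii} \to 0^+$ and to $+\infty$ (slowly) as $\Omega_{ii} \to \infty$. The quadratic term $\sum_i \Omega_{\cdot i}^{\mathsmaller{\top}}A\Omega_{\cdot i} = \TR\p{\Omega^{\mathsmaller{\top}}A\Omega}$ (resp. its $\spl$ analogue) is a PSD quadratic form in $\Omega$, hence bounded below by $0$, and it vanishes exactly on $K_A \cap \SYMG$ (for $\con_A$; the $\spl_A$ case needs a short argument that $\Omega_D^{-1}\Omega^{\mathsmaller{\top}}A\Omega$ has vanishing trace iff $A\Omega = 0$, using $\Omega_D^{-1} \succ 0$). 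So the only way the objective fails to be coercive is along directions in $K_A \cap \SYMG$ that keep the diagonal bounded away from $0$ and $\infty$, i.e. directions in $K_A \cap \DGZ = \KDGZ{A}$.

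\emph{Stage 2: The boundedness dichotomy.} I claim $f_A$ attains its infimum on $\DGPP$ if and only if $\KDGP{A} = \KDGZ{A}$. For the ``if'' direction, suppose the equality holds and take a minimizing sequence $\Omega^{(k)}$. The barrier term forces $\liminf \Omega^{(k)}_{ii} > 0$ and $\limsup \Omega^{(k)}_{ii} < \infty$ for each $i$ (otherwise the objective $\to +\infty$ along that sequence, since the quadratic part is $\geq 0$), so the diagonals lie in a compact subset of the positive orthant. If the off-diagonal parts were unbounded, pass to a subsequence with $\norm{\Omega^{(k)}_0} \to \infty$ and $\Omega^{(k)}_0/\norm{\Omega^{(k)}_0} \to \Psi \IN \DGZ$, $\Psi \neq 0$; boundedness of $f_A(\Omega^{(k)})$ plus the structure of the quadratic term forces $A\Psi = 0$, so $\Psi \IN \KDGZ{A} = \KDGP{A}$. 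But then $\Omega^{(k)} - t\Psi$ for suitable $t$ would be a competitor with strictly smaller objective (the quadratic term is unchanged along $\Psi$, the barrier term is unchanged since $\Psi$ has zero diagonal) — wait, this needs care: $\Psi$ lies in the \emph{lineality} direction, so actually I should argue the sequence can be replaced by a bounded one by subtracting off the $\KDGZ{A}$-component, reducing to a genuinely bounded minimizing sequence, whose limit point lies in $\DGPP$ by the barrier bounds and is a minimizer. For the ``only if'' direction, suppose $\Psi \IN \KDGP{A} \setminus \KDGZ{A}$, so $\Psi$ has a nonzero diagonal entry, say $\Psi_{ii} > 0$. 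Then for a fixed $\Omega^* \IN \DGPP$, the ray $\Omega^* + t\Psi$ stays in $\DGPP$ for $t \geq 0$, the quadratic term stays bounded (since $A\Psi = 0$ makes the cross and pure-$\Psi$ terms vanish), and $-2\sum_i \LOG\p{\Omega^*_{ii} + t\Psi_{ii}} \to -\infty$ as $t \to \infty$ because at least one diagonal entry grows linearly — so the objective is unbounded below and no minimizer exists. Here I should double-check the sign convention: $\PGP, \DGP$ consist of matrices with \emph{nonnegative} entries/eigenvalues, so $\KDGP{A}$ could contain $\Psi$ with some $\Psi_{ii}>0$ and others $=0$; the argument only needs one strictly positive diagonal entry, which is exactly the content of $\Psi \notin \KDGZ{A}$.

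\emph{Stage 3: The affine structure of the minimizing set.} Given a minimizer $\Omega^*$, the objective restricted to $\Omega^* + \KDGZ{A}$ is constant: adding $\Phi \IN \KDGZ{A}$ changes neither the diagonal (so the barrier term is fixed) nor the quadratic form value (since $A\Phi = 0$ kills every term involving $\Phi$: $\TR\p{(\Omega^*+\Phi)^{\mathsmaller{\top}}A(\Omega^*+\Phi)} = \TR\p{\Omega^{*\top}A\Omega^*}$, and similarly for $\spl_A$). Hence all of $\Omega^* + \KDGZ{A}$ consists of minimizers. Conversely, if $\Omega^{**}$ is another minimizer, convexity of $f_A$ and of $\DGPP$ means the whole segment between them is minimizing, so $f_A$ is affine — in fact constant — along it; since the quadratic part is convex and the barrier part is strictly convex in the diagonal, constancy forces $\Omega^{**}$ and $\Omega^*$ to share the same diagonal and to have $A(\Omega^{**}-\Omega^*) = 0$, i.e. $\Omega^{**} - \Omega^* \IN K_A \cap \DGZ \cap \SYMG = \KDGZ{A}$.

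\emph{Main obstacle.} The delicate point is Stage 2's ``if'' direction: converting a minimizing sequence whose off-diagonal part may be unbounded into a convergent one. The clean way is to quotient out the lineality space $\KDGZ{A}$ — show $f_A$ descends to a well-defined function on $\DGPP / \KDGZ{A}$ which is now coercive (the barrier controls the diagonal, and modulo $\KDGZ{A}$ the only off-diagonal directions left on which the quadratic form vanishes have been removed by the hypothesis $\KDGP{A}=\KDGZ{A}$ restricted appropriately) — and invoke existence of a minimizer there, lifting back. Making ``coercive modulo the lineality space'' precise, and checking that the quadratic form is positive definite transverse to $K_A \cap \SYMG$, is where the real work lies; the $\spl_A$ case additionally requires controlling the factor $\Omega_D^{-1}$, but since the diagonal is pinned in a compact positive set by the barrier this factor is uniformly bounded above and below, so the $\spl_A$ analysis reduces to the $\con_A$ one up to harmless constants.
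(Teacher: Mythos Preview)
Your overall architecture matches the paper's: quotient out the lineality space $\KDGZ{A}$, establish coercivity under the hypothesis to get existence, use a ray for non-existence, and exploit strict convexity of the log barrier in the diagonal for the affine description of the minimizing set. Your Stage~2 ``only if'' argument and your Stage~3 argument are correct and, if anything, a little cleaner than the paper's $g_1,g_2$ bookkeeping.

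The genuine gap is in Stage~2's ``if'' direction, and it traces back to a sign slip in Stage~1. You write that $-2\sum_i\LOG\p{\Omega_{ii}}$ ``blows up to $+\infty$ \ldots\ as $\Omega_{ii}\to\infty$''; in fact it goes to $-\infty$. Consequently the barrier alone does \emph{not} force $\limsup_k\Omega^{(k)}_{ii}<\infty$ along a minimizing sequence, and your parenthetical ``since the quadratic part is $\geq 0$'' is exactly backwards: nonnegativity of the trace term gives no upper control when the log term is heading to $-\infty$. What actually produces coercivity is that, after quotienting so that $\KDGP{A}=\set{0}$, the closed cone $\DGP$ meets the subspace $K_A$ only at the origin, whence $\TR\p{\Omega^{\mathsmaller{\top}}A\Omega}\geq\lambda_{\min}\norm[\mathrm{F}]{\Omega}^2$ on $\DGP$ for some $\lambda_{\min}>0$; this quadratic growth dominates the merely logarithmic decay of the barrier. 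Because your ``diagonal is pinned in a compact set'' claim is unsupported, your reduction of the $\spl_A$ case via a uniformly bounded $\Omega_D^{-1}$ also collapses: you cannot bound $\Omega_D^{-1}$ before you have coercivity, and you cannot get coercivity for $\spl_A$ from $\con_A$'s coercivity without first controlling $\Omega_D^{-1}$. The paper resolves this by the substitution $\beta_{ji}=\Omega_{ji}/\sqrt{\Omega_{ii}}$, which turns the $\spl_A$ trace term into a genuine quadratic form with the same kernel $K_A$ on a new (nonlinear but still closed) constraint cone, after which the same closed-cone coercivity argument applies verbatim; some device of this kind is needed.
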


Lemma~\ref{lemma::general} gives a geometric condition for the existence and uniqueness of the \textsc{concord} and \textsc{conspace} estimators. Both exist and are unique if and only if $\KDGP{S} = \KDGZ{S} = \set{0}$, where $S$ the sample covariance matrix . A similar result holds for the Gaussian likelihood:

\begin{lemma} \label{lemma::general_2}
	Given $A \SUCCEQ 0$, a unique minimizer of $G_{A}$ exists on $\PGPP$ if and only if $$\KPGP{A}{\mskip 1mu}{=}{\mskip 1mu}\set{0}.$$
\end{lemma}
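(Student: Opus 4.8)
The plan is to analyze the behavior of $\gaus_A$ on $\PGPP$ directly, mirroring the structure of Lemma~\ref{lemma::general} but exploiting the fact that the Gaussian objective is strictly convex on the set where it is finite. First I would record the basic convexity facts: $\Omega \mapsto -\LOG\abs{\Omega}$ is strictly convex on $\PPP$ and $\Omega \mapsto \tr(A\Omega)$ is linear, so $\gaus_A$ is convex on $\PGPP$; moreover it is \emph{strictly} convex precisely along directions that move within $\PPP$, i.e. away from the boundary. Hence any minimizer on $\PGPP$, if one exists, is automatically unique --- this already disposes of the uniqueness half of the statement once existence is handled, and shows the affine-set phenomenon of Lemma~\ref{lemma::general} collapses to a single point here.

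For the substantive equivalence, I would argue by recession directions / behavior at infinity of the sublevel sets. Fix any $\Omega_0 \IN \PGPP$ (which is nonempty, e.g. the identity lies in it since every graph contains its diagonal) and consider a sublevel set $\set{\Omega \IN \PGPP \colon \gaus_A(\Omega) \leq c}$ for $c \geq \gaus_A(\Omega_0)$. A minimizer exists iff some such sublevel set is compact, equivalently iff $\gaus_A$ has no nonzero recession direction in $\PGPP$ on which it is nonincreasing. The recession cone of $\PGPP$ (translated to the origin) is $\PGP$ itself, since it is a closed convex cone. For a direction $\Phi \IN \PGP$, one computes $\gaus_A(\Omega_0 + t\Phi)$ as $t \to \infty$: the trace term contributes $t\,\tr(A\Phi)$, while $-\LOG\abs{\Omega_0 + t\Phi}$ grows like $-(\RANK\Phi)\LOG t$ plus bounded terms (using that the eigenvalues of $\Omega_0 + t\Phi$ along the range of $\Phi$ scale like $t$ and the rest stay bounded). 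So $\gaus_A(\Omega_0 + t\Phi) \to -\infty$ or stays bounded iff $\tr(A\Phi) = 0$. Since $A \SUCCEQ 0$ and $\Phi \SUCCEQ 0$, $\tr(A\Phi) = 0$ holds iff $A\Phi = 0$ (write $A = \sum \lambda_k v_k v_k^{\mathsmaller{\top}}$ and note $\tr(A\Phi) = \sum \lambda_k v_k^{\mathsmaller{\top}} \Phi v_k$ is a sum of nonnegative terms), i.e. iff $\Phi \IN K_A$. Thus a bad recession direction exists iff $\KPGP{A} \neq \set{0}$, which gives the ``if'' direction (existence when the intersection is trivial) via compactness of some sublevel set together with continuity of $\gaus_A$.

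For the converse, if $0 \neq \Phi \IN \KPGP{A}$, then $t \mapsto \gaus_A(\Omega_0 + t\Phi) = -\LOG\abs{\Omega_0 + t\Phi} + \tr(A\Omega_0)$ is strictly decreasing in $t$ (the determinant is strictly increasing since $\Phi \SUCCEQ 0$, $\Phi \neq 0$), with the whole ray lying in $\PGPP$, so no minimizer exists. I expect the main technical obstacle to be the careful asymptotic estimate of $-\LOG\abs{\Omega_0 + t\Phi}$ when $\Phi$ is only positive \emph{semi}definite --- one must split $\reals^p$ into $\RANK\Phi$ and $\nullity\Phi$ directions and control the determinant's growth uniformly, which is cleanest via a simultaneous diagonalization / Weyl-type eigenvalue argument or by factoring through the generalized eigenvalue problem for the pencil $(\Omega_0, \Phi)$; the rest is routine convex analysis. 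An alternative route to the hard step, which I might use to shorten the argument, is to invoke a standard closedness property of $\gaus_A$ (it is a closed proper convex function on $\PGPP$ because of the barrier term) and characterize its recession function as $\Phi \mapsto \tr(A\Phi)$ on $\PGP$ and $+\infty$ otherwise, so that coercivity on $\PGPP$ modulo constants is equivalent to $\tr(A\Phi) > 0$ for all nonzero $\Phi \IN \PGP$, i.e. to $\KPGP{A} = \set{0}$.
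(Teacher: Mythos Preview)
Your proposal is correct and follows essentially the same route as the paper: both reduce the question to the behavior of $\gaus_A$ along rays $\Omega_0 + t\Phi$ with $\Phi \in \PGP$, use that $\tr(A\Phi)=0 \iff A\Phi=0$ for $A,\Phi \succeq 0$, and then show either that such a ray drives the objective to $-\infty$ (no minimizer) or, in its absence, that the objective is coercive on $\PGPP$ (so a unique minimizer exists by strict convexity of the log-barrier). The only cosmetic difference is that you phrase the $(\Leftarrow)$ direction in the language of recession cones/functions, whereas the paper writes down the equivalent elementary estimate $\tr(A\Omega) \geq \lambda_1 \|\Omega\|_{\mathrm F}$ on the pointed cone $\PGP$ directly.
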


Due to the log-determinant barrier term, the Gaussian likelihood is always strictly convex, and any minimizer (if it exists) is guaranteed to be unique; thus Lemma~\ref{lemma::general_2} primarily characterizes existence. Namely, the (unique) Gaussian \textsc{mle} will exist if and only if $\KPGP{S} = \set{0}$.

Intuitively, the criteria of Lemmas~\ref{lemma::general} and \ref{lemma::general_2} encode how $\con_{A}$, $\spl_{A}$, and $\gaus_{A}$ behave as $\norm[\mathrm{F}]{\Omega}\to \infty$. Due to barrier terms, each objective approaches $\infty$ near the boundary of $\DGPP$ or $\PGPP$. As such, from continuity a minimizer will exist over any bounded region. The mode by which a minimizer on an \textit{unbounded} set may fail to exist is then dictated by the behavior as $\norm[\mathrm{F}]{\Omega}\to\infty$; here the log-barrier terms are unbounded below, whereas $\tr\p{\Omega^TA\Omega}$, $\tr\p{\Omega_D^{-1}\Omega^TA\Omega}$, and $\tr\p{A\Omega}$ are unbounded above. Interplay between these two competing elements determines if a minimizer exists or not. In particular, when the trace term uniformly dominates as $\norm[\mathrm{F}]{\Omega}\to\infty$, a minimizer is guaranteed to exist. The sets $K_{A}$ and $\DGZ$ influence these dynamics: For the \textsc{concord} and Gaussian cases, $K_{A}$ and $\DGZ$ include, respectively, all the directions along which the trace terms and barrier $\LOG\p{\abs{\Omega_{D}}}$ are constant. Only if the trace term is constant and the log term is not (as for directions in $\KDGP{A}\setminus \KDGZ{A}$) will the latter dominate, in which case no minimizer will exist. $\KDGZ{A}$ also includes the directions along which the objectives overall are constant, influencing uniqueness. Thus, $K_{A}$ and $\DGZ$ jointly determine both existence and uniqueness of the minimizer. The \textsc{conspace} case behaves similarly, with additional technicalities.

\begin{remark}
	Lemmas~\ref{lemma::general} and \ref{lemma::general_2} together show a certain correspondence between the existence properties of the \textsc{concord}, \textsc{conspace}, and Gaussian maximum likelihood estimators. Disregarding log terms in the corresponding objectives, $\gaus_{A}$ is linear, $\con_{A}$ is quadratic, and $\spl_{A}$ is neither; as such, a correspondence is not to be expected in general. Although the objectives differ in form, the ``failure modes'' of each are however similar. The existence of a unique minimizer in all cases reduces to a convenient geometric arrangement relating the three methods. We shall leverage this novel discovery to characterize the minimum sample size required for each.
\end{remark}

\section{Convex pseudo-likelihood ranks: concord and conspace} \label{section::ranks}

In this section we quantify the sample size required for existence of unique pseudo-likelihood estimators in terms of the underlying graph $\mathcal{G}$. As the minimum sample size required is intimately linked to the rank of the sample covariance matrix, these requirements are first stated in terms of ranks, essentially serving as a proxy for the sample size. Towards this goal, define the following family of pseudo-likelihood ranks:

\begin{definition}[Weak \textsc{concord} rank]
 $\rho_{{\mskip 2mu}\textsc{concord}}\p{\mathcal{G}}$ is the smallest $\rho \IN \nats$ such that a unique minimizer of $\con_{A}$ exists on $\DGPP$ for almost every $A \SUCCEQ 0$ with $\RANK\p{A}\geq\rho$.
\end{definition}

\begin{definition}[Strong \textsc{concord} rank]
 $\rho_{{\mskip 2mu}\textsc{concord}}^{*}\p{\mathcal{G}}$ is the smallest $\rho^{*} \IN \nats$ such that a unique minimizer of $\con_{A}$ exists on $\DGPP$ for every $A \SUCCEQ 0$ in general position with $\RANK\p{A}\geq \rho^*$.
\end{definition}

\begin{definition}[Weak \textsc{conspace} rank]
 $\rho_{{\mskip 2mu}\textsc{conspace}}\p{\mathcal{G}}$ is the smallest $\rho \IN \nats$ such that a unique minimizer of $\spl_{A}$ exists on $\DGPP$ for almost every $A \SUCCEQ 0$ with $\RANK\p{A}\geq \rho$.
\end{definition}

\begin{definition}[Strong \textsc{conspace} rank]
 $\rho_{{\mskip 2mu}\textsc{conspace}}^{*}\p{\mathcal{G}}$ is the smallest $\rho^{*} \IN \nats$ such that a unique minimizer of $\spl_{A}$ exists on $\DGPP$ for every $A \SUCCEQ 0$ in general position with $\RANK\p{A} \geq \rho^*$.
\end{definition}

As $\RANK\p{A}$ increases, the minimization problems will transition between three regimes: First, a minimizer will not exist for any $A$, then exist only for some $A$ (and not for others), and eventually exist for almost every $A$. In particular, there is a point at which existence and uniqueness is guaranteed, a function of the underlying graph $G$. The pseudo-likelihood ranks as defined precisely quantify this behavior. The following lemma relates the four ranks above:

\begin{lemma} \label{thm::con_conspace}
	$\rho_{{\mskip 2mu}\textsc{concord}} = \rho_{{\mskip 2mu}\textsc{conspace}} \leq \rho_{{\mskip 2mu}\textsc{conspace}}^{*} = \rho_{{\mskip 2mu}\textsc{concord}}^{*}$.
\end{lemma}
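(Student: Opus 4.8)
The plan is to show that, for \emph{both} pseudo-likelihood objectives, the existence of a unique minimizer on $\DGPP$ is governed by one and the same objective-free condition on $A$; the two equalities then fall out by unwinding the definitions, and the single inequality follows from a genericity argument. First I would combine the two assertions of Lemma~\ref{lemma::general}: for either $\con_A$ or $\spl_A$, a minimizer on $\DGPP$ exists iff $\KDGP{A} = \KDGZ{A}$, and in that case the minimizer set is the affine space $\set{\Omega^* + \Phi\colon \Phi \in \KDGZ{A}}$, hence a singleton iff $\KDGZ{A} = \set{0}$. Since a zero diagonal is in particular nonnegative, $\DGZ \subseteq \DGP$, so $\set{0} \subseteq \KDGZ{A} \subseteq \KDGP{A}$ always; the conjunction ``$\KDGP{A} = \KDGZ{A}$ and $\KDGZ{A} = \set{0}$'' therefore collapses to the single condition $\KDGP{A} = \set{0}$. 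The crucial point is that this condition mentions neither $\con_A$ nor $\spl_A$: ``a unique minimizer of $\con_A$ exists on $\DGPP$'' and ``a unique minimizer of $\spl_A$ exists on $\DGPP$'' are literally the same statement about the pair $(A,\mathcal{G})$, namely $\KDGP{A}=\set{0}$.

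Given this, the two equalities are pure definition-chasing. The property defining $\rho_{\textsc{concord}}$ --- that for almost every $A \SUCCEQ 0$ with $\RANK(A)\ge\rho$ a unique minimizer of $\con_A$ exists on $\DGPP$ --- is, by the previous paragraph, word for word the property defining $\rho_{\textsc{conspace}}$ with $\con_A$ replaced by $\spl_A$; so the two smallest admissible $\rho$ agree, i.e.\ $\rho_{\textsc{concord}} = \rho_{\textsc{conspace}}$. Replacing ``almost every $A$'' by ``every $A$ in general position'' throughout yields identically $\rho_{\textsc{concord}}^{*} = \rho_{\textsc{conspace}}^{*}$. It thus remains only to prove the middle inequality $\rho_{\textsc{concord}} \le \rho_{\textsc{concord}}^{*}$.

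For this, write $\rho^{*}\coloneqq\rho_{\textsc{concord}}^{*}$, so that $\KDGP{A}=\set{0}$ for every $A\SUCCEQ 0$ in general position with $\RANK(A)\ge\rho^{*}$. I would then argue that, for each fixed $r$, the rank-$r$ positive semidefinite matrices failing to be in general position cut out a proper lower-dimensional (real algebraic) subset of the manifold of rank-$r$ PSD matrices, hence a null set for the natural measure there --- equivalently, almost every rank-$r$ PSD matrix is in general position, the same genericity phenomenon already invoked for sample covariance matrices in Section~\ref{section::prelims}. Consequently, for each $r\ge\rho^{*}$, a unique minimizer exists for almost every $A\SUCCEQ 0$ with $\RANK(A)=r$; taking the finite union over $r\in\set{\rho^{*},\dots,p}$, a unique minimizer exists for almost every $A\SUCCEQ 0$ with $\RANK(A)\ge\rho^{*}$. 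Hence $\rho^{*}$ is admissible in the definition of $\rho_{\textsc{concord}}$, giving $\rho_{\textsc{concord}}\le\rho^{*}=\rho_{\textsc{concord}}^{*}$, and assembling everything yields the claimed chain. The one step that needs genuine care is this last one --- fixing the measure on rank-$r$ PSD matrices and checking that failure of general position is null --- whereas everything preceding it is bookkeeping on top of Lemma~\ref{lemma::general}.
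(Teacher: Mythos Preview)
Your proposal is correct and follows essentially the same route as the paper: both equalities come from Lemma~\ref{lemma::general} (the existence/uniqueness criterion $\KDGP{A}=\set{0}$ is objective-free), and the middle inequality comes from the fact that general-position matrices are generic. Your write-up is simply more explicit than the paper's two-sentence proof, in particular spelling out the reduction of Lemma~\ref{lemma::general}'s two conditions to the single condition $\KDGP{A}=\set{0}$ and being more careful about what ``almost every $A\SUCCEQ 0$ with $\RANK(A)\ge\rho^*$'' means measure-theoretically; the paper just asserts that almost every matrix is in general position.
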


Lemma~\ref{thm::con_conspace} follows immediately from the (surprising) results of Lemma~\ref{lemma::general}, showing that the existence properties of the \textsc{conspace} and \textsc{concord} estimators are completely equivalent. Thus we shall denote both of the \textsc{concord} and \textsc{conspace} \textit{pseudo-likelihood ranks} jointly as $\rho$ and $\rho^{*}$. Of the two definitions, the weak rank $\rho$ naturally characterizes existence and uniqueness of the \textsc{concord} and \textsc{conspace} estimators within a general statistical framework. More specifically, given very general conditions on the underlying sample $\mathbf{X}$, the induced distribution of the sample covariance matrix $S$ is absolutely continuous on $\SYM$ and $\RANK\p{S}\overset{\mathsmaller{\mathrm{a.s.}}} = n$. Thus, if $n \geq \rho$, existence and uniqueness of the estimators is guaranteed almost surely, and $\rho$ is the smallest value for which such a guarantee is possible. The following result formalizes how sample size requirements are related to the complexity/size of a graph:

\begin{corollary} \label{corr::subset}
    For any graphs $\mathcal{G}_1 \subset \mathcal{G}_2$, $\rho\p{\mathcal{G}_1} \leq \rho\p{\mathcal{G}_2}$ and $\rho^*\p{\mathcal{G}_1} \leq \rho^*\p{\mathcal{G}_2}$.
\end{corollary}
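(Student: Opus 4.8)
The plan is to reduce the monotonicity of the ranks to the monotonicity of the existence criterion in Lemma~\ref{lemma::general}. Fix $A \SUCCEQ 0$ and suppose $\mathcal{G}_1 \subset \mathcal{G}_2$, so that $E(\mathcal{G}_1) \subseteq E(\mathcal{G}_2)$ and hence $\SYMG[1] \subseteq \SYMG[2]$ as linear subspaces of $\SYM$. Consequently $\DGZ[1] = \DZ \cap \SYMG[1] \subseteq \DZ \cap \SYMG[2] = \DGZ[2]$, and likewise $\DGP[1] \subseteq \DGP[2]$ and $K_A \cap \DGP[1] \subseteq K_A \cap \DGP[2]$, etc. The key claim I would establish is: \emph{if the \textsc{concord} (equivalently \textsc{conspace}) minimizer exists on $\DGPP[2]$, then it also exists on $\DGPP[1]$}; equivalently, if $\KDGP[2]{A} = \KDGZ[2]{A}$ then $\KDGP[1]{A} = \KDGZ[1]{A}$.

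To prove the claim, take $\Omega \IN K_A \cap \DGP[1]$. Since $\DGP[1] \subseteq \DGP[2]$, also $\Omega \IN K_A \cap \DGP[2] = K_A \cap \DGZ[2]$ by hypothesis, so $A\Omega = 0$ and $\Omega_{ii} = 0$ for all $i$. But $\Omega \IN \SYMG[1]$ already, so $\Omega \IN \DZ \cap \SYMG[1] = \DGZ[1]$, giving $\Omega \IN K_A \cap \DGZ[1]$. The reverse inclusion $K_A \cap \DGZ[1] \subseteq K_A \cap \DGP[1]$ is trivial, so $\KDGP[1]{A} = \KDGZ[1]{A}$ as desired. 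By Lemma~\ref{lemma::general}, a minimizer of $\con_A$ (resp.\ $\spl_A$) then exists on $\DGPP[1]$. Uniqueness follows in tandem: if additionally $\KDGZ[2]{A} = \set{0}$, then $\KDGZ[1]{A} \subseteq \KDGZ[2]{A} = \set{0}$, so the minimizer on $\DGPP[1]$ is unique whenever the one on $\DGPP[2]$ is.

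With the claim in hand, the corollary is immediate from the definitions of the ranks. For the weak rank: for almost every $A \SUCCEQ 0$ with $\RANK(A) \geq \rho(\mathcal{G}_2)$, a unique minimizer exists on $\DGPP[2]$, hence by the claim on $\DGPP[1]$; thus $\rho(\mathcal{G}_2)$ is an admissible value in the definition of $\rho(\mathcal{G}_1)$, giving $\rho(\mathcal{G}_1) \leq \rho(\mathcal{G}_2)$. For the strong rank the argument is identical, restricting to $A$ in general position with $\RANK(A) \geq \rho^*(\mathcal{G}_2)$; note general position is a property of $A$ alone and is unaffected by the choice of graph, so no extra care is needed there. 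The only mild subtlety — and the one place to be careful — is that the weak rank is defined via an ``almost every'' quantifier, so one must observe that the null set of bad $A$'s for $\mathcal{G}_2$ contains (after the reduction) the null set of bad $A$'s for $\mathcal{G}_1$ at that rank threshold; since the implication in the claim holds for \emph{every} $A$, this is automatic and presents no real obstacle.
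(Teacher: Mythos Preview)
Your proposal is correct and follows essentially the same route as the paper: both arguments use the inclusion $\SYMG[1]\subseteq\SYMG[2]$ to show that the Lemma~\ref{lemma::general} criterion for $\mathcal{G}_2$ (namely $\KDGP[2]{A}=\KDGZ[2]{A}$, with uniqueness when this set is $\{0\}$) implies the same criterion for $\mathcal{G}_1$, and then read off the rank inequalities from the definitions. The paper's version is slightly terser, writing the key step as $\KDGP[1]{A}=\KDGP[2]{A}\cap\SYMG[1]=\KDGZ[2]{A}\cap\SYMG[1]=\KDGZ[1]{A}$, which is exactly what your element-chasing unpacks.
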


Both the weak and strong ranks increase with subset inclusion. Thus, if either can be calculated exactly, immediate bounds corresponding to related graphs follow based on the appropriate inclusion relations. This monotonicity also formalizes the intuition that estimation becomes less demanding as graphs become sparse, but not arbitrarily. Fewer edges does not immediately guarantee a smaller sample size requirement; the edge set has to be a proper subset.

In what follows, we give bounds for the weak and strong ranks in terms of graph degeneracy, subgraph connectivity number, and generic completion rank \citep{blekherman_maximum_2019}. First, define these graph characteristics formally: 

\begin{definition}[Graph Degeneracy]
 $\delta\p{\mathcal{G}}$ is the smallest value $\delta\IN \nats$ such that every subgraph of $\mathcal{G}$ contains a vertex of degree at least $\delta$.
\end{definition}

\begin{definition}[Disconnection Number]
 $\kappa\p{\mathcal{G}}$ is the smallest value $\kappa\IN \nats$ such that there exists a disconnected or single-element subgraph of $\mathcal{G}$ with $\abs{V\p{\mathcal{G}}}-\kappa$ vertices.
\end{definition}

\begin{definition}[Subgraph Connectivity Number]
 $\kappa^*\p{\mathcal{G}}$ is the smallest value $\kappa^*\IN \nats$ such that $\kappa\p{\mathcal{G}'}\leq \kappa^*$ for any subgraph $\mathcal{G}'$ of $\mathcal{G}$.
\end{definition}

\begin{definition}[Generic Completion Rank]
    $\ell\p{\mathcal{G}}$ is the smallest $\ell \IN \nats$ such that the projection of $\set{\Omega\in\mathcal{S}\colon \rank\p{\Omega}=\ell}$ onto $\SYMG$ is dense.
\end{definition}

See Appendix~\ref{section::algorithm} for an algorithmic description of the generic completion rank. To facilitate comparison with the Gaussian \textsc{mle}, we also define the weak and strong Gaussian ranks \citep{BenDavid2015}:

\begin{definition}[Weak Gaussian Rank]
 $\gamma\p{\mathcal{G}}$ is the smallest value $\gamma\IN \nats$ such that a unique minimizer of $\gaus_{A}$ exists on $\PGPP$ for almost every $A \SUCCEQ 0$ with $\RANK\p{A}\geq \gamma$.
\end{definition}

\begin{definition}[Strong Gaussian Rank]
 $\gamma^*\p{\mathcal{G}}$ is the smallest value $\gamma^*\IN \nats$ such that a unique minimizer of $\gaus_{A}$ exists on $\PGPP$ for every $A \SUCCEQ 0$ in general position with $\RANK\p{A}\geq \gamma^*$.
\end{definition}

A comprehensive study of the Gaussian ranks is found in \cite{BenDavid2015}. The weak Gaussian rank is equivalent to the notion of ``maximum likelihood threshold'' \citep{gross_maximum_2018}. In the same spirit, we come to our main theorems bounding pseudo-likelihood ranks.

\begin{theorem}[Upper bound] \label{theorem::boundsupper}
    For any graph $\mathcal{G}$, both $\rho \leq \ell \leq \delta+1$ and $\rho^* \leq \delta+1$.
\end{theorem}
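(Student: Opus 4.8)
The plan is to prove the two chains $\rho \le \ell \le \delta+1$ and $\rho^* \le \delta+1$ using the geometric criterion of Lemma~\ref{lemma::general}: a unique minimizer of $\con_A$ (equivalently $\spl_A$, by Lemma~\ref{thm::con_conspace}) exists on $\DGPP$ exactly when $\KDGP{A} = \KDGZ{A} = \set{0}$. So in each case I want to exhibit a rank threshold above which $K_A \cap \DGP_{\mathcal G} = \set 0$, for the appropriate class of matrices $A$ (almost every $A$ for the weak ranks, every general-position $A$ for the strong rank). The key observation is that $\Omega \IN K_A$ means every column $\Omega_{\cdot i}$ lies in $\ker A$, so $\RANK\p{\Omega} \le \DIM\KER A = p - \RANK\p A$; combining this rank bound on $\Omega$ with the graphical support pattern of $\SYMG$ and the nonnegativity/zero-diagonal constraints of $\DGP$ and $\DGZ$ is what forces $\Omega = 0$.

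First I would handle $\ell \le \delta+1$, which is purely a statement about graphs and matrix completion and does not involve the estimator at all; I expect this to follow from a standard degeneracy-ordering argument (peel off a vertex of degree $\le \delta$, fill in its row/column of a rank-$(\delta+1)$ matrix consistently), and it may even be quotable from \cite{blekherman_maximum_2019, gross_maximum_2018}. Next, for $\rho \le \ell$: if $\RANK\p A \ge \ell$ then $\DIM\KER A \le p - \ell$, so any $\Omega \IN K_A$ has $\RANK\p\Omega \le p-\ell$; the defining property of the generic completion rank is that generic symmetric matrices of rank $\ell$ have support-$\mathcal G$ projections that are dense, and dually this should pin down that for generic $A$ there is no nonzero $\Omega \IN \SYMG$ of rank $\le p-\ell$ annihilated by $A$ — hence $\KDGP{A} \subseteq K_A \cap \SYMG = \set 0$, giving existence and uniqueness for almost every such $A$, i.e. $\rho \le \ell$.

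For the strong bound $\rho^* \le \delta+1$ I would argue directly with general position, which I expect to be the technical heart of the proof. Take $A \SUCCEQ 0$ in general position with $\RANK\p A \ge \delta+1$, and suppose $\Omega \IN \KDGP{A}$, i.e. $A\Omega = 0$, $\Omega \IN \SYMG$, and $\Omega_{ii} \ge 0$. Using a degeneracy order $v_1, \ldots, v_p$ of $\mathcal G$ (each $v_k$ has at most $\delta$ neighbors among $v_k, \ldots, v_p$), I would show inductively that each column $\Omega_{\cdot v_k}$ vanishes: the support of $\Omega_{\cdot v_k}$ restricted to the already-unprocessed vertices has size at most $\delta+1$ (the vertex itself plus $\le\delta$ later neighbors), so it is a vector supported on $\le \delta+1$ coordinates lying in $\ker A$; because $A$ is in general position, every $(\delta+1)\times(\delta+1)$ principal submatrix of $A$ has rank $\MIN\set{\delta+1,\RANK A} = \delta+1$ and hence is nonsingular, which forces that restricted subvector — and therefore the whole column, by the inductive hypothesis that earlier columns are zero — to be $0$. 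Once all columns are zero, $\Omega = 0$, so $\KDGP{A} = \KDGZ{A} = \set 0$ and Lemma~\ref{lemma::general} gives existence and uniqueness; hence $\rho^* \le \delta + 1$. The main obstacle is getting the bookkeeping in this induction exactly right — matching the support-size-$(\delta+1)$ claim to the degeneracy ordering and making sure the general-position hypothesis is invoked on the correct principal submatrix — together with the dual characterization of $\ell$ needed for the $\rho \le \ell$ step.
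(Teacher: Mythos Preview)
Your proposal is correct and takes essentially the same route as the paper. The degeneracy-peeling argument you give for $\rho^* \le \delta+1$ is exactly the paper's: it too peels vertices in a degeneracy order and uses that general position makes the relevant $(\le \delta+1)\times(\le \delta+1)$ principal submatrices of $A$ nonsingular, concluding $K_A \cap \SYMG = \set{0}$ (the paper phrases this via the quadratic form $\sum_i \omega_i^{\mathsmaller{\top}} A_i \omega_i$, but the content is identical, and as you implicitly note the $\DGP$ nonnegativity constraint plays no role). For $\rho \le \ell$, the paper does not develop the duality you sketch; instead it parametrizes rank-$r$ PSD matrices as $A = X^{\mathsmaller{\top}} X$, reduces ``$K_A \cap \SYMG = \set{0}$ for almost every $A$ of rank $r$'' to the same statement for generic $r \times p$ matrices $X$, and then directly invokes \cite[Theorem~6.3]{gross_maximum_2018} to get this for $r \ge \ell$. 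Your flagged ``dual characterization of $\ell$'' obstacle is thus handled by citation rather than argued from scratch, and $\ell \le \delta+1$ is deduced from the same peeling argument (general-position matrices being a specific dense set).
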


\begin{theorem}[Lower bound] \label{theorem::boundslower}
    For any graph $\mathcal{G}$, both $\gamma \leq \rho $ and $\gamma^* \leq \rho^* $.
\end{theorem}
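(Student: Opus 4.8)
The plan is to deduce the theorem directly from the geometric characterizations already established in Lemmas~\ref{lemma::general} and~\ref{lemma::general_2}, reducing the two rank inequalities to a single elementary containment between the cones $\PGP$ and $\DGP$. Concretely, I would show that for \emph{every} fixed $A \SUCCEQ 0$, if the (unique) \textsc{concord}/\textsc{conspace} estimator exists at $A$ then so does the Gaussian \textsc{mle}; the rank statements then follow by quantifying this implication over the appropriate families of $A$.

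First I would rephrase the pseudo-likelihood condition in a form amenable to comparison. By Lemma~\ref{lemma::general}, a minimizer of $\con_{A}$ (equivalently $\spl_{A}$) exists on $\DGPP$ if and only if $\KDGP{A} = \KDGZ{A}$, and the minimizers then form an affine set whose direction space is $\KDGZ{A}$; hence a \emph{unique} minimizer exists if and only if, in addition, $\KDGZ{A} = \set{0}$. Since a matrix with zero diagonal has in particular nonnegative diagonal, $\DGZ \subseteq \DGP$, so $\KDGZ{A} \subseteq \KDGP{A}$, and the two requirements collapse to the single condition $\KDGP{A} = \set{0}$. On the Gaussian side, Lemma~\ref{lemma::general_2} states directly that a unique minimizer of $\gaus_{A}$ exists on $\PGPP$ if and only if $\KPGP{A} = \set{0}$. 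Now every positive semidefinite matrix has nonnegative diagonal entries, i.e. $\PP \subseteq \DP$; intersecting with $\SYMG$ gives $\PGP \subseteq \DGP$, and intersecting further with $K_{A}$ gives $\KPGP{A} \subseteq \KDGP{A}$. Consequently $\KDGP{A} = \set{0}$ forces $\KPGP{A} = \set{0}$: whenever the pseudo-likelihood estimator exists and is unique at $A$, so does the Gaussian \textsc{mle}.

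It then remains to propagate this pointwise implication to the ranks. For the weak ranks: by definition of $\rho$, for almost every $A \SUCCEQ 0$ with $\RANK\p{A} \geq \rho$ the pseudo-likelihood minimizer exists and is unique, hence by the above so does the Gaussian minimizer; since $\gamma$ is the smallest value for which this Gaussian property holds, $\gamma \leq \rho$. The inequality $\gamma^* \leq \rho^*$ is obtained verbatim, with ``almost every $A \SUCCEQ 0$ with $\RANK\p{A} \geq \rho$'' replaced by ``every $A \SUCCEQ 0$ in general position with $\RANK\p{A} \geq \rho^*$''. I do not anticipate any real obstacle: the substantive work is already carried by Lemmas~\ref{lemma::general} and~\ref{lemma::general_2}, and the only genuinely new ingredient is the trivial containment $\PGP \subseteq \DGP$. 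The one step that merits care is the uniqueness clause — one must invoke $\DGZ \subseteq \DGP$ to see that the pseudo-likelihood existence-\emph{and}-uniqueness condition is precisely $\KDGP{A} = \set{0}$, rather than merely the existence condition $\KDGP{A} = \KDGZ{A}$, since it is this stronger condition that transfers to the Gaussian statement.
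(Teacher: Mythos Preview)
Your proposal is correct and follows essentially the same approach as the paper: the paper's proof of Theorem~\ref{theorem::boundslower} also reduces existence-and-uniqueness of the pseudo-likelihood minimizer to $\KDGP{A}=\set{0}$ via Lemma~\ref{lemma::general}, invokes the containment $\PP \subset \DP$ to conclude $\KPGP{A}=\set{0}$, and then applies Lemma~\ref{lemma::general_2}. Your write-up is somewhat more explicit about why the combined existence-and-uniqueness condition collapses to $\KDGP{A}=\set{0}$, but the argument is the same.
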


Theorem~\ref{theorem::boundslower} asserts that both pseudo-likelihood ranks are bounded below by the corresponding Gaussian equivalents. Thus, in general the sample size required for \textsc{concord} and \textsc{conspace} is at least that of the Gaussian \textsc{mle}, giving the impression that it may be higher. However, as will be shown, the sample size requirements are essentially the same for most practical purposes, and in fact are exactly equal for the most commonly studied graph types. Appendix~\ref{section::exact_compute} further discusses how the upper bound for $\rho$ in Theorem~\ref{theorem::boundsupper} can be improved computationally following the criterion of Lemma~\ref{lemma::general}.

Given that the Gaussian ranks are not readily accessible, graph-based bounds are required to inform statistical practice using the prescribed pseudo-likelihood (and Gaussian) methods. Combining our result from Theorems~\ref{theorem::boundsupper} and \ref{theorem::boundslower} with known lower bounds for $\gamma$, the following can be established for the pseudo-likelihood ranks:

\begin{corollary} \label{theorem::bounds}
    For any graph $\mathcal{G}$, $\kappa^*+1 \leq \rho \leq \min\set{\rho^*, \ell} \leq \max\set{\rho^*, \ell}\leq \delta+1$.
\end{corollary}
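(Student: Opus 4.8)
The statement is obtained by chaining together inequalities that are already available, so the plan is purely to assemble them in the right order and to supply one external ingredient, a known lower bound on the weak Gaussian rank. First I would invoke Lemma~\ref{thm::con_conspace}, which identifies the weak \textsc{concord} and \textsc{conspace} ranks and bounds them by the common strong rank: writing $\rho = \rho_{{\mskip 2mu}\textsc{concord}} = \rho_{{\mskip 2mu}\textsc{conspace}}$ and $\rho^{*} = \rho_{{\mskip 2mu}\textsc{conspace}}^{*} = \rho_{{\mskip 2mu}\textsc{concord}}^{*}$, this gives $\rho \le \rho^{*}$. Next I would call on Theorem~\ref{theorem::boundsupper}, which yields $\rho \le \ell$, $\ell \le \delta + 1$, and $\rho^{*} \le \delta + 1$. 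Combining $\rho \le \rho^{*}$ with $\rho \le \ell$ gives $\rho \le \min\set{\rho^{*}, \ell}$; the middle inequality $\min\set{\rho^{*},\ell} \le \max\set{\rho^{*},\ell}$ is immediate; and since $\rho^{*}$ and $\ell$ are each at most $\delta+1$, so is $\max\set{\rho^{*},\ell}$. This settles every inequality except the leftmost.

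For $\kappa^{*}+1 \le \rho$, I would use Theorem~\ref{theorem::boundslower}, which gives $\gamma \le \rho$, together with the known lower bound $\kappa^{*}+1 \le \gamma$ for the weak Gaussian rank (equivalently the maximum likelihood threshold), established in \cite{BenDavid2015} and \cite{uhler_geometry_2012}; chaining these yields $\kappa^{*}+1 \le \gamma \le \rho$. For a self-contained treatment one could instead recover $\gamma \ge \kappa^{*}+1$ directly from Lemma~\ref{lemma::general_2}: for a generic $A \SUCCEQ 0$ of rank $\kappa^{*}$ one exhibits a nonzero $\Omega \IN \KPGP{A}$ by placing a suitable low-rank positive semidefinite block on (the separable part of) a subgraph $\mathcal{G}'$ of $\mathcal{G}$ that witnesses $\kappa\p{\mathcal{G}'} = \kappa^{*}$, so that $\KPGP{A} \ne \set{0}$ and no unique Gaussian minimizer exists with $\kappa^{*}$ samples; but since this bound is standard, I would simply cite it.

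I do not expect a genuine obstacle here: all of the analytic work is carried by Lemma~\ref{thm::con_conspace} and Theorems~\ref{theorem::boundsupper}--\ref{theorem::boundslower}, which are already in hand, and the assembly itself is routine. The only point requiring care is bookkeeping: one must ensure the Gaussian lower bound being cited is the one for $\gamma$ as defined in this paper (the weak Gaussian rank, i.e.\ the maximum likelihood threshold in the sense of \cite{gross_maximum_2018}) rather than for the strong rank $\gamma^{*}$ or a differently normalised quantity, since it is precisely $\kappa^{*}+1 \le \gamma$, and no analogue for $\gamma^{*}$, that feeds the displayed chain.
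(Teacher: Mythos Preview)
Your proposal is correct and follows essentially the same route as the paper: the paper assembles $\rho \le \rho^{*}$ from Lemma~\ref{thm::con_conspace}, $\rho \le \ell \le \delta+1$ and $\rho^{*} \le \delta+1$ from Theorem~\ref{theorem::boundsupper}, $\gamma \le \rho$ from Theorem~\ref{theorem::boundslower}, and then cites \cite[Theorem~4.2]{BenDavid2015} for $\kappa^{*}+1 \le \gamma$. The only cosmetic difference is that the paper bundles this corollary's proof together with those of Theorems~\ref{theorem::boundsupper}--\ref{theorem::boundslower} and Corollary~\ref{theorem::bounds3}, rather than invoking them as black boxes.
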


\begin{corollary} \label{theorem::bounds3}
    For any graph $\mathcal{G}$, $\kappa^*+1 \leq \gamma \leq \rho \leq \ell \leq \delta+1$.
\end{corollary}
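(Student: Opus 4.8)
The plan is to assemble the chain $\kappa^*+1 \leq \gamma \leq \rho \leq \ell \leq \delta+1$ entirely out of results already established, so that the corollary requires no genuinely new argument. First, the two rightmost inequalities $\rho \leq \ell$ and $\ell \leq \delta+1$ are exactly the statement of Theorem~\ref{theorem::boundsupper}. Next, the middle inequality $\gamma \leq \rho$ is exactly Theorem~\ref{theorem::boundslower}. Hence the only missing ingredient is the leftmost inequality $\kappa^*+1 \leq \gamma$.

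For that I would invoke the known lower bound on the weak Gaussian rank. As noted in Section~\ref{section::ranks}, $\gamma$ coincides with the ``maximum likelihood threshold'' of the undirected Gaussian graphical model, and \cite{BenDavid2015} establishes that this threshold is at least one plus the subgraph connectivity number; I would cite this directly rather than reprove it. In the present language, via Lemma~\ref{lemma::general_2}, the bound $\gamma \geq \kappa^*+1$ amounts to exhibiting, for a positive-measure (indeed generic) family of $A \SUCCEQ 0$ of rank up to $\kappa^*$, a nonzero $\Omega \IN \KPGP{A}$ --- a nonzero positive semidefinite matrix obeying the constraints of $\mathcal{G}$ and annihilated by $A$ --- which one builds from the null space attached to a vertex subset whose induced subgraph disconnects after deleting $\kappa^*$ further vertices; optimizing over subgraphs produces exactly $\kappa^*+1$.

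The hard part, such as it is, is purely bookkeeping: verifying that the definition of $\gamma$ used here --- a unique minimizer of $\gaus_A$ exists on $\PGPP$ for almost every $A \SUCCEQ 0$ with $\RANK(A) \geq \gamma$ --- matches the sample-covariance formulation of \cite{BenDavid2015}, so that their bound transfers verbatim. This is routine, since a generic rank-$r$ positive semidefinite matrix arises, up to a density argument on $\SYM$, as the sample covariance $\tfrac1r\mathbf{X}^{\mathsmaller{\top}}\mathbf{X}$ of $r$ absolutely continuous observations, whence any sample-size lower bound proved in that model carries over to $\gamma$. Chaining the four inequalities then gives Corollary~\ref{theorem::bounds3}; combining in addition with $\gamma^* \leq \rho^* \leq \delta+1$ (from Theorems~\ref{theorem::boundslower} and \ref{theorem::boundsupper}) recovers the companion statement Corollary~\ref{theorem::bounds}.
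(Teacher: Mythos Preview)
Your proposal is correct and matches the paper's own argument essentially verbatim: the paper likewise obtains $\rho \leq \ell \leq \delta+1$ from Theorem~\ref{theorem::boundsupper}, $\gamma \leq \rho$ from Theorem~\ref{theorem::boundslower}, and then cites \citep[Theorem~4.2]{BenDavid2015} for $\kappa^*+1 \leq \gamma$. Your additional bookkeeping remark about reconciling the definition of $\gamma$ with the sample-covariance formulation is sound but not needed in the paper, which treats the identification as understood.
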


The bounds given in Corollaries~\ref{theorem::bounds} and \ref{theorem::bounds3} for the pseudo-likelihood ranks are the same as those of \cite{BenDavid2015, gross_maximum_2018} for the Gaussian \textsc{mle}. The upper bound informs the sample size required for practical implementation of both methods, so in this sense a pseudo-likelihood approach is competitive with its Guassian counterpart. Furthermore, for the most common classes of graphs studied in the graphical models literature, the outer bounds are known to coincide (i.e. $\kappa^* + 1 = \delta + 1$), giving exact correspondence $\gamma=\gamma^*=\rho=\rho^*$. This includes: trees, homogeneous graphs, circular graphs, rectangular grids, complete graphs, chordal graphs, and complete bipartite graphs; precise values may be found in Table~\ref{table::common}.

\begin{remark}
    Other bounds have been previously proposed for the Gaussian rank. \citep{10.2307/4616281} gives an upper bound $\gamma \leq \MIN\set{d, \operatorname{tw}} + 1$ in terms of the maximum degree $d$ and treewidth $\operatorname{tw}$. Natural questions are: (i) Is this bound applicable in the pseudo-likelihood context? (ii) How does it compare with graph degeneracy? $\delta \leq d$ elementarily: by definition there exists a subgraph of $\mathcal{G}$ whose minimum degree is $\delta$. Since $d$ instead equals the maximum degree over all subgraphs, $\delta \leq d$. Likewise, the treewidth equals the minimum degeneracy over all chordal covers of $\mathcal{G}$. Degeneracy increases with subgraph inclusion, implying $\delta \leq \operatorname{tw}$. Theorem~\ref{theorem::boundsupper} gives bounds for the pseudo-likelihood ranks in terms of graph degeneracy $\rho^* \leq \delta+1$, thus $\rho^* \leq \MIN\set{d, \operatorname{tw}}+1$ is also an applicable, but looser, upper bound.
\end{remark}

Although we have considered a deterministic regime for the purposes of our work, where the minimizer is guaranteed to exist, the results of Section~\ref{section::general} apply more broadly. From Lemmas~\ref{lemma::general} and \ref{lemma::general_2}, the \textsc{concord} and \textsc{conspace} pseudo-likelihood estimators exist or fail to exist together, and their (joint) existence implies that of the Gaussian \textsc{mle}. As such, we expect that the Gaussian \textsc{mle} will exist and be unique with uniformly higher probability for any sample size or data generating mechanism, even outside the deterministic regime delineated by the pseudo-likelihood and Gaussian ranks. To better understand the properties of our estimators in this ``probabilistic regime'', Appendix~\ref{section::numerical} provides a numerical investigation, and compares the relative probability of existence for both the pseudo-likelihood and Gaussian problems.

\pagebreak

\appendix

\section{Proofs of main results} \label{section::proofs}

\setcounter{lemmasupp}{0}
\begin{lemmasupp}
	Given $A \SUCCEQ 0$, consider either pseudo-likelihood objective $\con_{A}$ or $\spl_{A}$. Then a corresponding minimizer exists on $\DGPP$ if and only if
	\begin{equation}
		\KDGP{A} = \KDGZ{A}. \nonumber
	\end{equation}
	
	Furthermore, if a minimizer $\Omega^* \IN \DGPP$ exists, the set of minimizers is affine of the form
	\begin{equation}
		\set{\Omega^*+\Phi\colon \Phi \IN \KDGZ{A}}. \nonumber
	\end{equation}
\end{lemmasupp}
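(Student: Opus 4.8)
My plan is to treat $\con_{A}$ and $\spl_{A}$ simultaneously, writing $f$ for whichever objective is under consideration. Both are convex on the relatively open convex set $\DGPP$, and each splits as $-2\LOG\p{\abs{\Omega_D}}$ plus a nonnegative term ($\tr\p{\Omega^{\mathsmaller{\top}}A\Omega}$ for $\con_{A}$, $\tr\p{\Omega_D^{-1}\Omega^{\mathsmaller{\top}}A\Omega}$ for $\spl_{A}$), so $f\p{\Omega}\to+\infty$ whenever some $\Omega_{ii}\to 0^+$ with the other coordinates bounded. Extending $f$ by $+\infty$ off $\DGPP$ thus yields a proper closed convex function on $\SYMG$ (properness from $f\p{I}<\infty$, closedness from the barrier blow-up), and existence of a minimizer on $\DGPP$ is equivalent to attainment of its infimum. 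Throughout I use $A\Phi=0\iff A^{1/2}\Phi=0$, and that $A\Phi=0$ implies $\Phi^{\mathsmaller{\top}}A=0$.

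\textbf{Nonexistence.} Suppose $\KDGP{A}\neq\KDGZ{A}$, the inclusion $\KDGZ{A}\subseteq\KDGP{A}$ being immediate, and choose $\Phi\in\KDGP{A}\setminus\KDGZ{A}$; then $\Phi\in\SYMG$, $A\Phi=0$, $\Phi_{ii}\geq0$ for all $i$, and $\Phi_{jj}>0$ for some $j$. Along the ray $\Omega_0+t\Phi$ ($t\geq0$) issuing from any fixed $\Omega_0\in\DGPP$ — which remains in $\DGPP$ — the non-barrier term of $f$ stays bounded, since $A\Phi=0$ makes it literally constant for $\con_{A}$ and makes it converge to $\sum_{i\colon\Phi_{ii}=0}\p{\Omega_0}_{ii}^{-1}\norm{A^{1/2}\p{\Omega_0}_{\cdot i}}^2$ for $\spl_{A}$, whereas $-2\sum_i\LOG\p{\p{\Omega_0}_{ii}+t\Phi_{ii}}\to-\infty$. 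Hence $\inf_{\DGPP}f=-\infty$ and no minimizer exists; contrapositively, existence forces $\KDGP{A}=\KDGZ{A}$.

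\textbf{Existence.} Assume $L\coloneqq\KDGP{A}=\KDGZ{A}$, a linear subspace contained in $\DGZ$. Expanding $f$ and using $\Phi^{\mathsmaller{\top}}A=A\Phi=0$ together with $\Phi_D=0$ shows $f$ is invariant under translation by any $\Phi\in L$. Since for $\Omega\in\DGPP$ the orthogonal projection $P_L\Omega$ lies in $L\subseteq\DGZ$, the matrix $\Omega-P_L\Omega$ has the same (positive) diagonal as $\Omega$ and so lies in $\DGPP$; hence it suffices to minimize $f$ over $U\coloneqq\DGPP\cap L^{\perp}$, where $L^{\perp}$ is the Frobenius-orthogonal complement of $L$ within $\SYMG$. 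I will show each sublevel set $\set{\Omega\in U\colon f\p{\Omega}\leq c}$ is compact; being nonempty (e.g.\ for $c=f\p{I}$, as it then contains $I-P_L I$), it yields a minimizer. Closedness again comes from the barrier blow-up. For boundedness, suppose $\Omega_k\in U$ with $f\p{\Omega_k}\leq c$ and $\norm[\mathrm{F}]{\Omega_k}\to\infty$; along a subsequence $\Psi_k\coloneqq\Omega_k/\norm[\mathrm{F}]{\Omega_k}\to\Psi$ with $\norm[\mathrm{F}]{\Psi}=1$, $\Psi\in L^{\perp}$, and $\Psi\in\DGP$. Using $\LOG\p{\p{\Omega_k}_{ii}}\leq\LOG\norm[\mathrm{F}]{\Omega_k}$ to bound the barrier below by $-2p\LOG\norm[\mathrm{F}]{\Omega_k}$, and factoring the appropriate power of $\norm[\mathrm{F}]{\Omega_k}$ (degree $2$ for $\con_{A}$, degree $1$ for $\spl_{A}$) out of the non-barrier term, the bound $f\p{\Omega_k}\leq c$ forces the rescaled non-barrier term to vanish in the limit; this yields $A^{1/2}\Psi=0$, so $\Psi\in\KDGP{A}=L$, and with $\Psi\in L^{\perp}$ we get $\Psi=0$, contradicting $\norm[\mathrm{F}]{\Psi}=1$.

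\textbf{Minimizer set, and the main obstacle.} Given a minimizer $\Omega^*$, translation-invariance under $L=\KDGZ{A}$ gives $\Omega^*+\KDGZ{A}\subseteq M$ for the minimizer set $M$. Conversely, for $\Omega^{**}\in M$, convexity forces $f$ to be constant on $[\Omega^*,\Omega^{**}]$; decomposing $f$ along the line through $\Omega^*,\Omega^{**}$ as barrier plus non-barrier, each summand is convex in the line parameter and their sum is constant on $[0,1]$, hence each is affine there. Strict convexity of $-\LOG$ then forces the diagonal of $\Phi\coloneqq\Omega^{**}-\Omega^*$ to vanish, whereupon affineness of the (now quadratic) non-barrier term forces $A^{1/2}\Phi=0$; thus $\Phi\in\KDGZ{A}$ and $M=\Omega^*+\KDGZ{A}$. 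I expect the boundedness step of the existence half to be the crux: one must get the scaling comparison between the logarithmic barrier and the non-barrier term exactly right — these have different homogeneity, and different forms for the two objectives — and for $\spl_{A}$ handle that the denominators $\p{\Omega_k}_{ii}$ may themselves degenerate along the normalized sequence, so that vanishing of the limiting non-barrier term must be extracted summand by summand.
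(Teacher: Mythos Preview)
Your proposal is correct, and the strategy differs from the paper's in several respects worth noting. For nonexistence, you run along a ray $\Omega_0+t\Phi$ from a fixed interior point, which stays in $\DGPP$ because $\Phi\in\DGP$; the paper instead approximates $\alpha\Phi$ by nearby $\Psi\in\DGPP$ and bounds the perturbation. For existence, you argue by sequential compactness on $\DGPP\cap L^{\perp}$, normalizing an unbounded sequence and showing its limit direction lies in $L\cap L^{\perp}$; the paper instead invokes a cone-separation constant $\lambda_{\min}$ so that $\tr\p{\Omega^{\mathsmaller{\top}}A\Omega}\geq\lambda_{\min}\norm[\mathrm{F}]{\Omega}^2$ on $\DGP$ after quotienting. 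Most significantly, you treat $\spl_{A}$ directly, exploiting the perspective-function convexity of $\p{v,s}\mapsto\norm{v}^2/s$, whereas the paper reparametrizes via $\beta_{ji}=\Omega_{ji}/\sqrt{\Omega_{ii}}$ to reduce $\spl_{A}$ to a $\con_{A}$-like form over a new cone $\RGP$. Your approach is more uniform across the two objectives and avoids the change of variables; the paper's decomposition via the scalar functions $g_1,g_2$ makes the directional linearity analysis more explicit. For the minimizer set, your ``sum of two convex functions constant $\Rightarrow$ each affine'' device is cleaner than the paper's term-by-term $g_1,g_2$ argument, though both ultimately hinge on strict convexity of $-\log$ to kill the diagonal of $\Phi$. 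Your flagged worry about degenerate denominators in the $\spl_{A}$ boundedness step is easily dispatched: since $\p{\Psi_k}_{ii}\leq\norm[\mathrm{F}]{\Psi_k}=1$, each summand of the rescaled trace dominates $\norm{A^{1/2}\p{\Psi_k}_{\cdot i}}^2$, so $T_k\to 0$ forces $A^{1/2}\Psi=0$ directly.
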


\begin{proof}
	Consider first the functions $g_1\colon\reals^2\to\reals$ and $g_2\colon\reals\times\p{0,\infty}\to\reals$ given by $g_1\p{u,v; c}\coloneqq\p{u+cv}^2$ and $g_2\p{u,v; c}\coloneqq\p{u+cv}^2/v$. Both are convex in $\p{u,v}$ for any $c\in \reals$. Furthermore, $g_1\p{u_0+\alpha u^{*}, v_0+\alpha v^{*}; c}$ is linear in $\alpha$ when $u^*+cv^*=0$ and strictly convex otherwise. Likewise $g_2\p{u_0+\alpha u^{*}, v_0+\alpha v^{*}; c}$ is linear in $\alpha$ when $u^*v_0-v^*u_0=0$ and strictly convex otherwise.
	
	Given $A \SUCCEQ 0$, we have $A=\Gamma^2$ for some unique matrix $\Gamma \SUCCEQ 0$. For the objective $\con_{A}$, we may write $\con_{A}\p{\Omega}=-2\sum_i\LOG\p{\Omega_{ii}}+\sum_i\sum_k\p{\Gamma_{ki}\Omega_{ii}+\sum_{j\neq i}\Gamma_{kj}\Omega_{ji}}^2=-2\sum_i\LOG\p{\Omega_{ii}}+\sum_i\sum_k g_1\p{\sum_{j\neq i}\Gamma_{kj}\Omega_{ji}, \Omega_{ii}; \Gamma_{ki}}$. Note each term is convex in $\Omega$. Supposing a minimizer $\Omega^* \in \DGPP$ exists, then to construct the minimizer set we need only find the directions $\Phi\in \SYMG$ in which $\con_{A}\p{\Omega^*+\alpha\Phi}$ (and by extension each of its terms) is linear (therefore constant) in $\alpha$. $-\LOG\p{\cdot}$ is strictly convex, so $\Phi_{ii}=0$ necessarily for all $i$. Thus $\sum_{j\neq i}\Gamma_{kj}\Phi_{ji}=0$ for any $i,k$ from the linearity criterion of $g_1$. Summarizing, we have $\Phi_{ii}=0$ and $\Gamma\Phi_{\cdot i}=0$ for all $i$. Then $\Phi\in \KDGZ{A}$. Because $\con_{A}\p{\Omega^*+\alpha\Phi}$ is constant in $\alpha$ for any $\Phi \in \KDGZ{A}$, we have fully characterized the minimizer set.
	
	Without loss of generality, we may remove the constant direction $\KDGZ{A}$ from consideration. Let $T$ be the orthogonal complement of $\KDGZ{A}$ within $\SYMG$. We may restrict the problem to $T$ implicitly by projection along $\KDGZ{A}$, on which the objective is constant. Include now the additional assumption $\KDGZ{A} = \set{0}$ after restriction to $T$.
	
	$\p{\implies}$ Now assume there exists an element $\Phi \in \KDGP{A} \setminus \set{0}$. Then $\Phi_{ii}>0$ for some $i$ because $\KDGZ{A} = \set{0}$. For any $\alpha>0$ and $\varepsilon>0$ there exists an element $\Psi \in \DGPP$ such that $\norm[\mathrm{F}]{\Psi-\alpha\Phi} \leq \varepsilon$. We have $\Psi_{ii}\geq \alpha\Phi_{ii}-\varepsilon$ and $\TR\p{\Psi^{\mathsmaller{\top}}A\Psi} \leq \p{\sqrt{\TR\p{\Phi^{\mathsmaller{\top}}A\Phi}}+\sqrt{\TR\p{\p{\Psi-\alpha\Phi}^{\mathsmaller{\top}}A\p{\Psi-\alpha\Phi}}}}^2 = \TR\p{\p{\Psi-\alpha\Phi}^{\mathsmaller{\top}}A\p{\Psi-\alpha\Phi}} \leq \norm[\mathrm{F}]{\Psi-\alpha\Phi}^2 \leq \lambda_{\mathrm{max}}\varepsilon^2$, for some $\lambda_{\mathrm{max}}>0$ depending only on $A$. Letting $\alpha\to\infty$, $\epsilon\to 0$ gives $\con_{A}\p{\Psi} \to -\infty$, and thus no minimizer exists.
	
	$\p{\impliedby}$ When $\KDGP{A} = \set{0}$, then because $\DGP$ is a closed cone and $K_{A}$ is a linear space there exists a $\lambda_{\mathrm{min}}>0$ such that $ \TR\p{\Omega^{\mathsmaller{\top}}A\Omega}\geq\lambda_{\mathrm{min}}\norm[\mathrm{F}]{\Omega}^2$ on $\DGP$. We have $\con_{A}\p{\Omega}\leq -\p{p/2}\LOG\p{\norm[\mathrm{F}]{\Omega}}+\lambda_{\mathrm{min}}\norm[\mathrm{F}]{\Omega}^2$, which goes to $\infty$ uniformly as $\norm[\mathrm{F}]{\Omega}\to\infty$. Thus we reduce the minimization to a closed subset of $\DGP$, excluding the boundaries where the objective becomes uniformly large. Because the objective is continuous, a minimizer is guaranteed to exist.
	
	For the objective $\spl_{A}$ a similar argument applies. Again, suppose a minimizer $\Omega^* \in \DGPP$ exists. We may write $\spl\p{\Omega}=-2\sum_i\LOG\p{\Omega_{ii}}+\mybrk\sum_i\sum_k g_2\p{\sum_{j\neq i}\Gamma_{kj}\Omega_{ji}, \Omega_{ii}; \Gamma_{ki}}$. For any direction $\Phi\in \SYMG$ such that $\spl\p{\Omega^*+\alpha\Phi}$ is linear in $\alpha$, we have $\Phi_{ii}=0$ for all $i$. Therefore because $\p{\Omega^*}_{ii}>0$, from the directional linearity criterion of $g_2$ we have $\sum_{j\neq i}\Gamma_{kj}\Phi_{ji}=0$ for all $k$. Then $\Phi\in \KDGZ{A}$. As with the previous case, $\spl_{A}\p{\Omega^*+\alpha\Phi}$ is constant in $\alpha$ for any $\Phi \in \KDGZ{A}$, thus we have fully characterized the minimizer set for $\spl_{A}$ as well. Without loss of generality we may remove the constant direction, assuming further that $\KDGZ{A} = \set{0}$.
	
	Now to characterize existence of a minimizer for the \textsc{conspace} objective, first reparametrize $\spl_{A}$. The map $\Omega_{ji}\to\Omega_{ji}/\sqrt{\Omega_{ii}}\eqqcolon \beta_{ji}$ is invertible and continuous. The reparametrized objective becomes $-4\sum_i\LOG\p{\beta_{ii}}+\sum_i\beta_{\cdot i}^{\mathsmaller{\top}}A\beta_{\cdot i}$ constrained such that $\beta_{ji}\beta_{ii}=\beta_{ij}\beta_{jj}$ for all $i$ and $j$ and $\beta_{ii}>0$ for all $i$. Likewise, given a graph $\mathcal{G}$, the corresponding model constraints give $\beta_{ij}=\beta_{ji}=0$ if $\set{i,j}\in E\p{\mathcal{G}}$. Note that the reparametrized objective contains the same quadratic form as $\con_{A}$, with kernel $K_{A}$. Denote the reparametrized constraint space as $\RGPP$, which is a cone in $\mathcal{M}$. The closure is $\RGP$, in which the diagonal positivity constraint is relaxed to include $\beta$ such that $\beta_{ii}\geq 0$ for all $i$, while still satisfying the other two constraints. Note that $\RGP$ also depends nonlinearly on $T$, but the image under reparametrization is, importantly, still a closed cone. By the same arguments previously made, a minimizer fails to exist whenever $\KRGP{A} \setminus \set{0} \neq \emptyset$, and exists when $\KRGP{A} = \set{0}$. Now remove the reparametrization. $\Omega_{\cdot i}/\sqrt{\Omega_{ii}}=0$ if and only if $\Omega_{\cdot i}=0$, thus $\KRGP{A} \setminus \set{0} \neq \emptyset$ if and only if $\KDGP{A}\setminus\set{0}\neq \emptyset$. The result follows.
\end{proof}

\setcounter{lemmasupp}{1}
\begin{lemmasupp}
	Given $A \SUCCEQ 0$, a unique minimizer of $G_{A}$ exists on $\PGPP$ if and only if $$\KPGP{A}{\mskip 1mu}{=}{\mskip 1mu}\set{0}.$$
\end{lemmasupp}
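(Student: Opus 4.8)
The plan is to mirror the proof of the preceding lemma, now exploiting that the log-determinant term makes $\gaus_{A}$ \emph{strictly} convex on all of $\PGPP$, not merely convex. First I would dispatch uniqueness: $-\LOG\abs{\Omega}$ is strictly convex on the open cone $\PPP$, $\tr\p{A\Omega}$ is linear, and $\PGPP$ is convex, so $\gaus_{A}$ is strictly convex on $\PGPP$ and any minimizer is automatically unique. Hence the entire content of the lemma is the existence criterion, and in particular there is no ``constant-direction'' subspace to quotient out as there was for $\con_{A}$ and $\spl_{A}$.

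For the direction ``a minimizer exists $\implies \KPGP{A}=\set{0}$'' I would argue by contraposition, exactly as in the $\con_{A}$ case. Suppose $\Phi \IN \KPGP{A}\setminus\set{0}$, so $\Phi \IN \SYMG$, $\Phi \SUCCEQ 0$, $\Phi \neq 0$, and $A\Phi = 0$. Fix any $\Omega_0 \IN \PGPP$ and slide along the ray $\Omega_0 + \alpha\Phi$, which remains in $\PGPP$ for every $\alpha > 0$. Since $A\Phi = 0$, the linear term equals $\tr\p{A\Omega_0} + \alpha\tr\p{A\Phi} = \tr\p{A\Omega_0}$, constant along the ray, while factoring $\abs{\Omega_0+\alpha\Phi} = \abs{\Omega_0}\,\abs{I + \alpha\,\Omega_0^{-1/2}\Phi\,\Omega_0^{-1/2}}$ and using that $\Omega_0^{-1/2}\Phi\,\Omega_0^{-1/2}$ is positive semidefinite and nonzero shows $\abs{\Omega_0+\alpha\Phi}\to\infty$. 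Therefore $\gaus_{A}\p{\Omega_0+\alpha\Phi}\to-\infty$, the infimum is $-\infty$, and no minimizer exists.

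For the converse, assume $\KPGP{A}=\set{0}$ and establish coercivity of $\gaus_{A}$ on $\PGPP$. The key estimate, obtained exactly as in the previous proof, is a \emph{linear} lower bound: there is $\lambda_{\min} > 0$ with $\tr\p{A\Omega} \geq \lambda_{\min}\norm[\mathrm{F}]{\Omega}$ for all $\Omega \IN \PGP$. Indeed, on the compact slice $\set{\Omega \IN \PGP \colon \norm[\mathrm{F}]{\Omega} = 1}$ the continuous functional $\Omega\mapsto\tr\p{A\Omega}$ is nonnegative (since $A,\Omega\SUCCEQ 0$) and vanishes only where $A\Omega = 0$, which is excluded because $\KPGP{A}=\set{0}$; so it attains a positive minimum $\lambda_{\min}$, and homogeneity of degree one extends the bound to the whole cone. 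Together with the elementary inequality $-\LOG\abs{\Omega} \geq -p\LOG\norm[\mathrm{F}]{\Omega}$ on $\PPP$ (every eigenvalue is at most $\norm[\mathrm{F}]{\Omega}$), this gives $\gaus_{A}\p{\Omega} \geq -p\LOG\norm[\mathrm{F}]{\Omega} + \lambda_{\min}\norm[\mathrm{F}]{\Omega}$, which tends to $\infty$ as $\norm[\mathrm{F}]{\Omega}\to\infty$. Since $\PGPP \neq \emptyset$ (for instance $I \IN \PGPP$) the infimum is finite, so a minimizing sequence $\Omega_n$ must be bounded; passing to a convergent subsequence with limit $\Omega^{*} \IN \PGP$, the limit cannot be singular, for otherwise $-\LOG\abs{\Omega_n}\to\infty$ while $\tr\p{A\Omega_n}$ stays bounded, contradicting minimality. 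Hence $\Omega^{*}\IN\PGPP$ and continuity gives $\gaus_{A}\p{\Omega^{*}} = \inf$; uniqueness follows from strict convexity.

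The determinant manipulations are routine; the one step needing a little care is the same one that appears for $\con_{A}$ --- the compactness-plus-homogeneity argument yielding the positive constant $\lambda_{\min}$, the observation that a linearly growing term dominates the logarithmic barrier, and the clean separation of the two ways a point can escape $\PGPP$ (norm blow-up versus eigenvalue degeneracy). I therefore expect this lemma to reduce to a streamlined version of the preceding argument, with strict convexity of $\gaus_{A}$ eliminating the uniqueness/affine-minimizer bookkeeping needed there.
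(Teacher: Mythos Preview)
Your proposal is correct and follows essentially the same route as the paper: a ray argument $\Omega_0+\alpha\Phi$ with $\Phi\in\KPGP{A}\setminus\set{0}$ to force $\gaus_{A}\to-\infty$ in one direction, and a compactness-plus-homogeneity bound $\tr\p{A\Omega}\geq\lambda_{\min}\norm[\mathrm{F}]{\Omega}$ on the closed cone $\PGP$ to establish coercivity in the other. The only cosmetic differences are that the paper records the identity $\set{\Omega:\tr\p{A\Omega}\leq 0}\cap\PGP=\KPGP{A}$ explicitly via the square-root decomposition $A=\Gamma^2$ before invoking it, and bounds $\LOG\abs{\Omega_0+\alpha\Phi}$ through an eigenvalue/trace inequality rather than your conjugation $\Omega_0^{-1/2}\Phi\,\Omega_0^{-1/2}$; your added minimizing-sequence and strict-convexity bookkeeping is simply spelled out in more detail than the paper's concluding sentence.
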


\begin{proof}
	
	Because $A \SUCCEQ 0$, we may write $A=\Gamma^2$ for some unique $\Gamma \SUCCEQ 0$. Then for $\Omega \SUCCEQ 0$, $ \TR\p{A\Omega}=\sum_i\Gamma_{\cdot i}^{\mathsmaller{\top}}\Omega\Gamma_{\cdot i}\geq 0$. Because $\Omega \SUCCEQ 0$, $\Gamma_{\cdot i}^{\mathsmaller{\top}}\Omega\Gamma_{\cdot i}=0 \iff \Omega\Gamma_{\cdot i}=0$. Then $A\Omega=\sum_i\Gamma_{\cdot i}\Gamma_{\cdot i}^{\mathsmaller{\top}}\Omega=0$. We have $\set{\Omega\in \mathcal{M}\colon \TR\p{A\Omega}\leq 0} \cap \PGP = \KPGP{A}$.
	
	$\p{\implies}$ The proof is similar to that of Lemma~\ref{lemma::general}, and for brevity we omit overlapping arguments. Assume first that there exists an element $\Phi \in \KPGP{A} \setminus \set{0}$. Then for any $\Omega^*\in\PGPP$ and $\alpha>0$ define $\Psi\coloneqq \Omega^*+\alpha\Phi$. We have $\TR\p{A\Psi}=\TR\p{A\Omega^*}$ and $\LOG\abs{\Psi} \geq \LOG\p{\p{\TR\p{\Omega^*}+\alpha\TR\p{\Phi}}\lambda_\mathrm{min}\p{\Omega^*}^{p-1}/p}=\infty$, where $\lambda_{\mathrm{min}}\p{\Omega^*}>0$ is the smallest eigenvalue of $\Omega^*$. Letting $\alpha\to\infty$ gives $\gaus_{A}\p{\Psi}\to-\infty$, and we conclude no minimizer exists on $\PGPP$.
	
	$\p{\impliedby}$ Now assume $\KPGP{A} = \set{0}$. Then $\set{\Omega\in \mathcal{M}\colon \TR\p{A\Omega}\leq 0} \cap \PGP = \set{0}$. Because $\PGP$ is a pointed closed cone, there exists some $\lambda_1>0$ such that $ \TR\p{A\Omega}\geq \lambda_1\norm[\mathrm{F}]{\Omega}$. Then $\gaus_{A}\p{\Omega}\geq -\p{p/2}\LOG\p{\norm[\mathrm{F}]{\Omega}}+\lambda_1\norm[\mathrm{F}]{\Omega}$. This bound goes to $\infty$ uniformly as $\norm[\mathrm{F}]{\Omega}\to\infty$, and the objective approaches $\infty$ near the other boundaries of $\PGPP$, thus we conclude the unique minimizer exists.
\end{proof}

\setcounter{lemmasupp}{2}
\begin{lemmasupp}
	$\rho_{{\mskip 2mu}\textsc{concord}} = \rho_{{\mskip 2mu}\textsc{conspace}} \leq \rho_{{\mskip 2mu}\textsc{conspace}}^{*} = \rho_{{\mskip 2mu}\textsc{concord}}^{*}$.
\end{lemmasupp}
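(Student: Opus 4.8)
The plan is to show all four inequalities reduce to the equivalence established in Lemma~\ref{lemma::general}. The key observation is that Lemma~\ref{lemma::general} characterizes existence-and-uniqueness of \emph{both} pseudo-likelihood minimizers by the \emph{same} geometric condition $\KDGP{A} = \KDGZ{A}$, which depends only on $A$ (through $K_A$) and $\mathcal{G}$, not on which of the two objectives $\con_A$, $\spl_A$ we started from. Thus for every fixed $A \SUCCEQ 0$, a unique \textsc{concord} minimizer exists on $\DGPP$ if and only if a unique \textsc{conspace} minimizer does. First I would record this pointwise equivalence explicitly.

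From the pointwise equivalence, the two outer equalities follow immediately. For $\rho_{\textsc{concord}} = \rho_{\textsc{conspace}}$: the set of $A$ for which a unique \textsc{concord} minimizer exists coincides (as a subset of $\PP$) with the set for which a unique \textsc{conspace} minimizer exists; hence the statement ``a unique minimizer exists for almost every $A \SUCCEQ 0$ with $\RANK(A) \geq \rho$'' holds for $\con$ at a given $\rho$ if and only if it holds for $\spl$ at that same $\rho$, so the least such $\rho$ is the same. The identical argument, now restricting attention to $A$ in general position, gives $\rho_{\textsc{concord}}^{*} = \rho_{\textsc{conspace}}^{*}$. I would phrase both as: the defining condition for the rank is a property of the \emph{pair} $(A, \mathcal{G})$ that is insensitive to the choice of pseudo-likelihood.

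The middle inequality $\rho_{\textsc{concord}} = \rho_{\textsc{conspace}} \leq \rho_{\textsc{conspace}}^{*} = \rho_{\textsc{concord}}^{*}$ (equivalently, after the outer equalities, $\rho \leq \rho^{*}$) is a ``weak $\leq$ strong'' comparison: the weak rank asks for existence-and-uniqueness for almost every $A$ of sufficient rank, while the strong rank asks for it for every $A$ in general position of sufficient rank. Since the set of matrices in general position of a given rank, within the set of all symmetric PSD matrices of that rank, has full measure (indeed its complement is a proper algebraic subvariety, as noted in the preliminaries for the sample covariance matrix), the ``every $A$ in general position with $\RANK(A) \geq \rho^{*}$'' guarantee implies the ``almost every $A$ with $\RANK(A) \geq \rho^{*}$'' guarantee — any exceptional set is contained in the measure-zero set of non-general-position matrices. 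Hence the weak condition holds at $\rho = \rho^{*}$, so $\rho \leq \rho^{*}$. I expect this step, reconciling the ``general position'' qualifier with the ``almost every'' qualifier, to be the only point requiring care; the main obstacle is making precise that non-general-position matrices of a fixed rank form a null set within the PSD matrices of that rank, which follows from the algebraic characterization of general position together with the fact that the relevant parametrizing set (e.g. $\{\Gamma\Gamma^\top : \Gamma \in \reals^{p \times \rho}\}$) is irreducible, so that the vanishing of the relevant minors cuts out either everything or a lower-dimensional subvariety — and the latter holds because general-position matrices of rank $\rho$ do exist.
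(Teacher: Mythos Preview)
Your proposal is correct and follows essentially the same route as the paper: the two outer equalities come directly from Lemma~\ref{lemma::general} (the existence-and-uniqueness criterion is identical for $\con_A$ and $\spl_A$), and the middle inequality comes from the fact that general-position matrices have full measure. You spell out the measure-theoretic step more carefully than the paper, which simply asserts that almost every symmetric matrix is in general position.
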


\begin{proof}
    Equality between the \textsc{concord} and \textsc{conspace} ranks follows immediately from Lemma~\ref{lemma::general}, showing that the existence properties of the \textsc{conspace} and \textsc{concord} estimators are completely equivalent. Furthermore, almost every matrix in $\SYM$ is in general position, thus $\rho_{\textsc{concord}} \leq \rho^*_{\textsc{concord}}$.
\end{proof}


\setcounter{corollarysupp}{0}
\begin{corollarysupp}
    For any graphs $\mathcal{G}_1 \subset \mathcal{G}_2$, $\rho\p{\mathcal{G}_1} \leq \rho\p{\mathcal{G}_2}$ and $\rho^*\p{\mathcal{G}_1} \leq \rho^*\p{\mathcal{G}_2}$.
\end{corollarysupp}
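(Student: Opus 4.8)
The plan is to recast existence-and-uniqueness as a single condition on $\mathcal{G}$ that is visibly monotone under edge addition, and then read the inequalities off the definitions of $\rho$ and $\rho^*$. First I would record a convenient corollary of Lemma~\ref{lemma::general}: for $A \SUCCEQ 0$, a minimizer of $\con_{A}$ (equivalently $\spl_{A}$) exists on $\DGPP$ iff $\KDGP{A} = \KDGZ{A}$, with minimizer set $\set{\Omega^* + \Phi \colon \Phi \IN \KDGZ{A}}$. Since $\DGZ \subseteq \DGP$ we always have $\KDGZ{A} \subseteq \KDGP{A}$, so that set is a singleton precisely when $\KDGP{A} = \set{0}$. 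Hence a \emph{unique} minimizer of either pseudo-likelihood objective exists on $\DGPP$ if and only if $\KDGP{A} = \set{0}$, and this is the only property of $\mathcal{G}$ that $\rho$ and $\rho^*$ track.

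Next I would exploit the monotonicity of the feasible set. If $\mathcal{G}_1 \subset \mathcal{G}_2$ then $E\p{\mathcal{G}_1} \subseteq E\p{\mathcal{G}_2}$, so the zero-pattern constraints defining $\SYMG[1]$ are a superset of those defining $\SYMG[2]$; thus $\SYMG[1] \subseteq \SYMG[2]$, and intersecting with the graph-independent cone $\DP$ gives $\DGP[1] \subseteq \DGP[2]$. Intersecting further with the fixed linear space $K_{A}$ yields $\KDGP[1]{A} \subseteq \KDGP[2]{A}$ for every $A \SUCCEQ 0$. Consequently $\KDGP[2]{A} = \set{0}$ implies $\KDGP[1]{A} = \set{0}$: whenever the \textsc{concord}/\textsc{conspace} estimator for $\mathcal{G}_2$ exists and is unique, so does that for $\mathcal{G}_1$.

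Finally I would feed this into the definitions. With $\rho_2 \coloneqq \rho\p{\mathcal{G}_2}$, for almost every $A \SUCCEQ 0$ with $\RANK\p{A} \geq \rho_2$ we have $\KDGP[2]{A} = \set{0}$, hence $\KDGP[1]{A} = \set{0}$; so $\rho_2$ is admissible in the definition of $\rho\p{\mathcal{G}_1}$, giving $\rho\p{\mathcal{G}_1} \leq \rho_2$. For the strong rank the argument is identical, replacing ``almost every $A$'' by ``every $A$ in general position''; since general position is a property of $A$ alone, the class of matrices quantified over is the same for both graphs, and we obtain $\rho^*\p{\mathcal{G}_1} \leq \rho^*\p{\mathcal{G}_2}$.

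I do not anticipate a genuine obstacle: the only delicate point is the direction of the inclusion above (adding edges \emph{enlarges} the symmetric subspace, hence the feasible cone and its intersection with $K_{A}$), together with the fact that intersecting with the graph-independent objects $\DP$, $\DZ$, and $K_{A}$ preserves inclusions; the remainder is just bookkeeping with the definitions of the weak and strong pseudo-likelihood ranks.
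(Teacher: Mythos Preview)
Your proposal is correct and follows essentially the same route as the paper: both arguments rest on the inclusion $\SYMG[1]\subseteq\SYMG[2]$ together with the criterion of Lemma~\ref{lemma::general}, then read off the rank inequalities from the definitions. Your version is marginally more streamlined in that you first collapse existence and uniqueness into the single condition $\KDGP{A}=\set{0}$ (using $\DGZ\subseteq\DGP$), whereas the paper treats the existence condition $\KDGP[2]{A}=\KDGZ[2]{A}$ and the uniqueness condition $\KDGZ[2]{A}=\set{0}$ separately before intersecting each with $\SYMG[1]$; the underlying idea is identical.
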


\begin{proof}
    $\SYMG[1]\subset\SYMG[2]$ are the linear constraint sets associated with the given graphs. For $K_{A}=\set{\Omega\in \mathcal{M}\colon A\Omega=0}$, we have that $\KDGZ[2]{A}=\set{0}\implies \KDGZ[1]{A}=\KDGZ[2]{A}{\cap}{\mskip 3mu}\SYMG[1]=\set{0}$. Likewise, $\KDGP[2]{A} = \KDGZ[2]{A} \implies \KDGP[1]{A} = \KDGP[2]{A}{\cap}{\mskip 3mu}\SYMG[1] = \KDGZ[2]{A}{\cap}{\mskip 3mu}\SYMG[1] = \KDGZ[1]{A}$. Via Lemma~\ref{lemma::general}, we see that existence of the pseudo-likelihood minimizers given $\mathcal{G}_2$ implies the same given $\mathcal{G}_1$, likewise for uniqueness. The result follows.
\end{proof}

\setcounter{theoremsupp}{0}
\begin{theoremsupp}[Upper bound]
    For any graph $\mathcal{G}$, both $\rho \leq \ell \leq \delta+1$ and $\rho^* \leq \delta+1$.
\end{theoremsupp}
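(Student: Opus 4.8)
The statement bundles three inequalities --- $\rho \le \ell$, $\ell \le \delta+1$, and $\rho^* \le \delta+1$ --- which I would prove separately. The middle one is a combinatorial fact about the generic completion rank: I would cite \cite{blekherman_maximum_2019}, or reprove it by greedily completing a generic partial matrix with the diagonal-plus-edge pattern to rank $\delta+1$. Along an ordering $u_1,\ELPS,u_p$ of $V\p{\mathcal{G}}$ in which each $u_j$ has at most $\delta$ earlier neighbors, successively choose vectors $b_{u_1},\ELPS,b_{u_p}\IN\reals^{\delta+1}$ so that, under a fixed nondegenerate symmetric bilinear form, $b_{u_j}$ realizes the prescribed diagonal value at $u_j$ and the prescribed edge values to its already-placed neighbors --- at most $\delta$ linear and one quadratic constraint on $\reals^{\delta+1}$, which $\delta+1$ coordinates accommodate generically; the resulting matrix of pairwise form-values then has rank at most $\delta+1$ and the prescribed pattern. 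The two rank bounds both go through Lemma~\ref{lemma::general}: for a given $A\SUCCEQ0$ the \textsc{concord}/\textsc{conspace} minimizer exists and is unique precisely when $\KDGP{A}=\set{0}$ (which in particular forces $\KDGZ{A}=\set{0}$), so each bound reduces to showing $\KDGP{A}=\set{0}$ once $\RANK\p{A}$ is sufficiently large under the relevant genericity hypothesis on $A$.

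For $\rho^*\le\delta+1$, fix $A\SUCCEQ0$ in general position with $\RANK\p{A}\ge\delta+1$, and a degeneracy ordering $v_1,\ELPS,v_p$ of $V\p{\mathcal{G}}$, so that each $v_j$ has at most $\delta$ neighbors among $v_{j+1},\ELPS,v_p$. Given $\Omega\IN\KDGP{A}$, I would show by induction on $j$ that rows and columns $v_1,\ELPS,v_j$ of $\Omega$ all vanish. Granting this for $j-1$: the entries of the column $\Omega_{\cdot v_j}$ at $v_1,\ELPS,v_{j-1}$ are already zero, and its off-support entries vanish by the graphical constraint, so $\Omega_{\cdot v_j}$ is supported on $v_j$ together with those neighbors of $v_j$ that follow it in the ordering --- a set $T_j$ with $\abs{T_j}\le\delta+1$. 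From $A\Omega_{\cdot v_j}=0$, taking the rows indexed by $T_j$ gives $A_{T_j,T_j}\,\p{\Omega_{\cdot v_j}}_{T_j}=0$; but $A_{T_j,T_j}$ is an $\abs{T_j}\TIMES\abs{T_j}$ principal submatrix with $\abs{T_j}\le\delta+1\le\RANK\p{A}$, hence of rank $\abs{T_j}$ by general position and invertible, forcing $\Omega_{\cdot v_j}=0$ and, by symmetry, the $v_j$ row. Hence $\Omega=0$, so $\KDGP{A}=\set{0}$ and a unique minimizer exists; this gives $\rho^*\le\delta+1$.

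The substantive part is $\rho\le\ell$. By Lemma~\ref{lemma::general} and the inclusion $\KDGP{A}\subseteq\KSYMG{A}$, it suffices to show that for every $r\ge\ell$ and almost every $A\SUCCEQ0$ of rank $r$ one has $\KSYMG{A}=\set{0}$, i.e.\ the only $\Omega\IN\SYMG$ all of whose columns lie in $\KER A$ is $\Omega=0$. This condition depends on $A$ only through $W\coloneqq\KER A$, and for almost every rank-$r$ matrix $A$ the subspace $W$ is a generic $(p-r)$-dimensional subspace of $\reals^p$ (the kernel map being a submersion onto the Grassmannian on the rank-$r$ locus). So the goal reduces to: \emph{for a generic subspace $W$ of dimension at most $p-\ell$, no nonzero $\Omega\IN\SYMG$ has column space inside $W$.} I would prove this by recognizing it as the defining property of the generic completion rank. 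Writing $\pi\colon\SYM\to\SYMG$ for the orthogonal projection, a tangent-space computation identifies the orthogonal complement in $\SYM$ of the tangent space to $\set{\Omega\IN\SYM\colon\RANK\p{\Omega}\le\ell}$ at a rank-$\ell$ matrix $M_0$ with $\set{\Omega\IN\SYM\colon\text{column space of }\Omega\subseteq\KER M_0}$; hence $d\pi_{M_0}$ maps onto $\SYMG$ iff that set meets $\SYMG$ only at $0$. Since the determinantal variety $\set{\Omega\IN\SYM\colon\RANK\p{\Omega}\le\ell}$ is irreducible and the restriction of $\pi$ to it is dominant --- which for $\ell=\ell\p{\mathcal{G}}$ is exactly the definition of the generic completion rank --- generic smoothness shows that $d\pi_{M_0}$ is onto for generic $M_0$, whose kernel $W$ is then a generic $(p-\ell)$-plane; monotonicity of the condition in $\dim W$ extends it to all $r\ge\ell$. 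The main obstacle is precisely this translation --- establishing the tangent-space identity, invoking generic smoothness to pass from dominance of $\pi$ to surjectivity of its differential at a general point, and the Grassmannian-genericity of $\KER A$ --- after which $\rho\le\ell$ follows.
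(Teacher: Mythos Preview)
Your argument is correct, and for $\rho^*\le\delta+1$ it is essentially the paper's proof: both peel off vertices along a degeneracy ordering, using general position to make the relevant $(\le\delta+1)\times(\le\delta+1)$ principal submatrix invertible and force each successive column of $\Omega$ to vanish, yielding $\KSYMG{A}=\set{0}$.

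For the other two inequalities you take a genuinely different route. The paper handles both through the characterization of $\ell$ in \cite[Theorem~6.3]{gross_maximum_2018}, which identifies the generic completion rank with the smallest $r$ for which $\KSYMG{X}=\set{0}$ for almost every $r\times p$ matrix $X$: citing this gives $\rho\le\ell$ immediately (via $A=X^{\mathsmaller{\top}}X$ and Lemma~\ref{lemma::general}), and the degeneracy-ordering argument above, applied on the dense set of general-position matrices of rank $\delta+1$, then yields $\ell\le\delta+1$ through the same characterization. You instead prove $\rho\le\ell$ from scratch by identifying the normal space to the rank-$\ell$ determinantal variety at $M_0$ with $\set{\Omega\IN\SYM\colon\Omega(\reals^p)\subseteq\KER M_0}$, invoking generic smoothness of the dominant projection to $\SYMG$, and pushing genericity through the kernel map to the Grassmannian; and you treat $\ell\le\delta+1$ separately via a greedy rank-$(\delta+1)$ completion (or a citation of \cite{blekherman_maximum_2019}). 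The paper's path is shorter and keeps the algebraic geometry black-boxed; yours is more self-contained and makes transparent \emph{why} density of the projection is equivalent to generic triviality of $\KSYMG{A}$---you are effectively reproving the relevant direction of the Gross--Sullivant result rather than citing it.
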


\setcounter{theoremsupp}{1}
\begin{theoremsupp}[Lower bound]
    For any graph $\mathcal{G}$, both $\gamma \leq \rho $ and $\gamma^* \leq \rho^* $.
\end{theoremsupp}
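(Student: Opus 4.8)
The plan is to reduce both claimed inequalities to a single pointwise implication: for every fixed $A \SUCCEQ 0$, existence of a \emph{unique} pseudo-likelihood minimizer on $\DGPP$ forces existence of the (automatically unique) Gaussian minimizer on $\PGPP$. Granting this, $\gamma \leq \rho$ and $\gamma^* \leq \rho^*$ drop out by matching the definitions of the respective ranks.

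First I would record the exact existence/uniqueness criteria. By Lemma~\ref{lemma::general}, a minimizer of $\con_A$ (equivalently $\spl_A$, by Lemma~\ref{thm::con_conspace}) on $\DGPP$ exists iff $\KDGP{A} = \KDGZ{A}$, and, when it exists, the minimizer set is the affine space $\Omega^* + \KDGZ{A}$. Since $\DGZ \subseteq \DGP$ we have $\KDGZ{A} \subseteq \KDGP{A}$; hence a unique minimizer exists iff $\KDGP{A} = \set{0}$. By Lemma~\ref{lemma::general_2}, a unique minimizer of $\gaus_A$ on $\PGPP$ exists iff $\KPGP{A} = \set{0}$.

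The geometric crux is the containment $\PGP \subseteq \DGP$: every positive semidefinite matrix has nonnegative diagonal entries, i.e. $\PP \subseteq \DP$, and intersecting both sides with the linear space $\SYMG$ preserves the inclusion. Therefore $\KPGP{A} \subseteq \KDGP{A}$, so $\KDGP{A} = \set{0}$ implies $\KPGP{A} = \set{0}$; this is exactly the pointwise implication.

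Finally I would push this through the definitions. For almost every $A \SUCCEQ 0$ with $\RANK\p{A} \geq \rho$, the pseudo-likelihood minimizer is unique, hence $\KDGP{A} = \set{0}$, hence $\KPGP{A} = \set{0}$, hence the Gaussian minimizer exists and is unique; thus $\rho$ is an admissible threshold in the definition of $\gamma$, giving $\gamma \leq \rho$. Replacing ``almost every'' by ``every $A$ in general position'' and $\rho$ by $\rho^*$ runs the identical argument and yields $\gamma^* \leq \rho^*$. I expect no real obstacle: the only point requiring care is extracting, from Lemma~\ref{lemma::general} and its description of the minimizer set, that \emph{uniqueness} of the pseudo-likelihood solution is equivalent to the strengthened condition $\KDGP{A} = \set{0}$ rather than to the bare existence condition $\KDGP{A} = \KDGZ{A}$; everything after that is bookkeeping.
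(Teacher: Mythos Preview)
Your proposal is correct and matches the paper's own argument essentially line for line: the paper also invokes Lemma~\ref{lemma::general} to reduce unique pseudo-likelihood existence to $\KDGP{A}=\set{0}$, uses $\PP\subset\DP$ to obtain $\KPGP{A}=\set{0}$, and concludes via Lemma~\ref{lemma::general_2}. Your explicit derivation that uniqueness (not mere existence) is equivalent to $\KDGP{A}=\set{0}$ is a point the paper glosses over with the word ``equivalently'', so if anything your write-up is slightly more careful.
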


\setcounter{corollarysupp}{1}
\begin{corollarysupp}
    For any graph $\mathcal{G}$, $\kappa^*+1 \leq \rho \leq \min\set{\rho^*, \ell} \leq \max\set{\rho^*, \ell}\leq \delta+1$.
\end{corollarysupp}

\setcounter{corollarysupp}{2}
\begin{corollarysupp}
    For any graph $\mathcal{G}$, $\kappa^*+1 \leq \rho \leq \min\set{\rho^*, \ell} \leq \max\set{\rho^*, \ell}\leq \delta+1$.
\end{corollarysupp}

\begin{proof}
    We will establish the above four results together, starting with Theorem~\ref{theorem::boundsupper}. Let a graph $\mathcal{G}$ be given where node $i$ has degree $d_i$. First we will establish the upper bound for $\rho^*=\rho^*\p{\mathcal{G}}$. Assume that $\RANK\p{A}\geq \delta\p{\mathcal{G}}+1$, where $\delta=\delta\p{\mathcal{G}}$ is the degeneracy.
    
    Consider the quadratic form $\sum_i \omega_i^{\mathsmaller{\top}}A \omega_i$. Each term is nonnegative, and the kernel is $K_{A}\coloneqq\set{\Omega\in \mathcal{M}\colon A\Omega=0}$. We may rewrite this form as $\sum_i \omega_i^{\mathsmaller{\top}}A_i \omega_i$, where only the nonzero elements defined by the graphical model are included, and $A_i$ is the submatrix of $A$ associated with the incident edges of $i$. Each $A_i$ has rank at least $\MIN\set{d_i+1, \delta+1}$ when $A$ is in general position. Via the definition of graph degeneracy, there is at least one node $j$ with degree equal to $\delta$, thus $\omega_j^{\mathsmaller{\top}}A_j \omega_j$ is strictly convex. To construct an element of the kernel, $\omega_{jk}=0$ for any edge $\set{j,k}$ incident with $j$. This reduction is equivalent to removing node $j$ from the graph. The problem reduces. Because each subgraph of $\mathcal{G}$ has at least one node with degree at least $\delta$, we may eventually eliminate all vertices in this fashion, and thus conclude strict convexity for the initial quadratic form. We have $\KSYMG{A}=\set{0}$. By Lemma~\ref{lemma::general} a unique minimizer for $\con_{A}$ exists. We conclude $\rho^*\leq \delta+1$.

    The space of positive semi-definite matrices of rank $r$ is parametrized by the map $X\to X^{\mathsmaller{\top}}X$, where $X$ is a $r\times p$ real matrix. Since this map is polynomial in the elements of $X$, and $X^{\mathsmaller{\top}}X\Omega=0$ if and only if $X\Omega=0$, then $\KSYMG{A}=\set{0}$ for almost every $A \SUCCEQ 0$ if and only if $\KSYMG{X}=\set{0}$ for almost every $r\times p$ matrix $X$. By Theorem~6.3 in \citep{gross_maximum_2018}, this property holds for any $r\geq\ell$. Since $\KSYMG{A}=\set{0}\implies \KDGP{A}=\set{0}$, we conclude $\rho\leq \ell$ via the criterion of Lemma~\ref{lemma::general}. Likewise, since the general position matrices are a specific dense set, the initial argument made above can be used to show that $\ell\leq \delta+1$.
    
    Next we compare the Gaussian and pseudo-likelihood ranks. For a minimizer of the \textsc{concord} and \textsc{conspace} objectives to exist and be unique, via Lemma~\ref{lemma::general} equivalently $\KDGP{A} = \set{0}$. $\PP \subset \DP \implies \KPGP{A} = \set{0}$, thus via Lemma~\ref{lemma::general_2} the unique Gaussian minimizer exists. Theorem~\ref{theorem::boundslower} follows.

    Corollaries~\ref{theorem::bounds} and \ref{theorem::bounds3} are established via the derived inequality $\gamma \leq \rho \leq \rho^* \leq \delta+1$. Subsequently $\kappa^*+1 \leq \gamma$ via \citep[Theorem~4.2]{BenDavid2015}, where $\kappa^* = \kappa^*\p{\mathcal{G}}$ is the subgraph connectivity.
\end{proof}

\section{Exact values of the Gaussian and pseudo-likelihood ranks}

\begin{table}[h]
 \begin{center}
 \begin{tabular}{|l|c|c|}
 \hline
 Graph type & $\gamma = \rho$ \\
 \hline
 $\operatorname{Tree}$ & 2 \\
 $\operatorname{Circular}\p{p}$& 3 \\
 $\operatorname{Rectangular Grid}$& 3 \\
 $\operatorname{Complete}\p{p}$ & $p$ \\
 $\operatorname{Homogeneous}$ & $\delta$ \\
 $\operatorname{Chordal}$ & $\delta$ \\
 $\operatorname{Complete Bipartite}\p{m,n}$ & $\MIN\set{m,n}$ \\
 \hline
 \end{tabular}
 \end{center}
 
 \caption{Exact values of the Gaussian rank $\gamma$ and the weak pseudo-likelihood rank $\rho$ for various graph types. Graph degeneracy is denoted by $\delta$. Note that the Gaussian and pseudo-likelihood ranks are the same for all classes of graph in the table.}
 \label{table::common}
\end{table}

\section{Directly computing upper bounds for the weak pseudo-likelihood rank} \label{section::exact_compute}

To compute $\rho^*\p{\mathcal{G}}$, from Lemma~\ref{lemma::general} we must characterize $\KDGP{A} = \KSYMG{A}\cap\set{\Omega\in{\mathcal M}\colon \Omega_{ii}\geq 0 \ \forall i =1,\ELPS, p}$. A minimizer of $\con_{A}$ or $\spl_{A}$ exists on $\DGPP$ and is unique if and only if $\KDGP{A}=\KDGZ{A}=\set{0}$. Let $\pi_D\colon\mathcal{M}\to\reals^p$ denote the projection operator such that $\pi_D\p{\Omega}=\pi_D\p{\Omega_D}=\p{\Omega_{ii}}_{i=1}^p$. Then the criterion $\KDGP{A}=\set{0}$ can be reduced to the equivalent $\pi_D\p{\KSYMG{A}}\cap [0, \infty)^p=\set{0}$ and $\KDGZ{A}=0$.

Given $\mathcal{G}$, the set of constraint equations defining $\SYMG$ are $\Omega_{ij}-\Omega_{ji}=0$ for all $\set{i,j}\in\mathcal{G}$ and $\Omega_{ij}=\Omega_{ji}=0$ for all $\set{i,j}\notin G$. Then there exists a matrix $C_{\mathcal G}$ encoding these linear equations such that $\Omega\in\SYMG\iff C_{\mathcal G}\operatorname{vec}\p{\Omega}=0$.

Furthermore, consider the set of positive semi-definite matrices of rank $r$. The set is parametrized under the map $X\to X^{\mathsmaller{\top}}X$, where $X$ is any $r\times p$ real matrix. We have $X^{\mathsmaller{\top}}X\Omega=0\iff X\Omega=0$. Furthermore, because the map is polynomial in the elements of $X$, the pre-image of any negligible set is negligible. Therefore $\KDGP{A}=\set{0}$ for almost every $A\succeq 0$ with $\rank\p{A}=r$ if and only if $\KDGP{X}=\set{0}$ for almost every $r\times p$ real matrix. Proceeding, there exists a matrix $B_r$ encoding the linear equations $X\Omega=0$ such that $\Omega\in K_X\iff B_r\operatorname{vec}\p{\Omega}=0$. The elements of $B_r$ are polynomial in the elements of $X$. The kernel of $D=[C_{\mathcal{G}}^{\mathsmaller{\top}}, B_r^{\mathsmaller{\top}}]^{\mathsmaller{\top}}$ encodes the space $\KSYMG{X}$. Let the columns of $D$ be ordered so that columns associated with off-diagonal elements come before those associated with diagonals, and further assume $D$ to be in row-echelon form. The nullity of $D$ excluding the $p$ final columns gives $\dim\p{\KDGZ{X}}$. Increment $r$ until $\dim\p{\KDGZ{X}}=0$; this can be accomplished without recalculation by successively introducing new blocks to $B_r$ and reducing. We have $\rho^*\p{\mathcal{G}}\geq r$. Then let $D'$ be the submatrix created by removing columns and pivotal rows associated with the off-diagonal elements; $\ker\p{D'}$ corresponds to $\pi_D\p{\KSYMG{X}}$. The problem then reduces to ensuring $\ker\p{D'}\cap [0,\infty)^p=\set{0}$ almost everywhere. $D'$ is a $\rank\p{D'}\times p$ polynomial matrix in the elements of $X$. We increment $r$ until the following checks are satisfied. Let $\bar{r}\p{\mathcal G}$ be the smallest such $r\in\nats$:

Consider the set $\ker\p{D'}\cap [0, \infty)^p$. Assume that $\ker\p{D'}$ is not entirely contained in any of the $p$ linear boundary spaces of $[0, \infty)^p$, otherwise the problem reduces. For the condition $\ker\p{D'}\cap [0, \infty)^p=\set{0}$ to hold, either $\nullity\p{D'}=1$ or $\ker\p{D'}\cap [0, \infty)^{i-1}\cap\set{0}\cap[0, \infty)^{p-i}=\set{0}$ for all $i=1,\ELPS,p$. In the second case, the criterion can be checked recursively on each of the $p$ boundary faces, avoiding repeat checks where possible. The intersection of $\ker\p{D'}$ and the $i$\textsuperscript{th} boundary face is given by the kernel of $D'$ with the $i$\textsuperscript{th} column removed, and so no recalculations involving $D'$ are required at this step. In the first case, if $\ker\p{D'}$ is not contained in any boundary face, $\ker\p{D'}\cap [0, \infty)^p=\set{0}\iff \ker\p{D'}^{\mathsmaller{\bot}}\cap [0, \infty)^p\neq \set{0}$. $\ker\p{D'}^{\mathsmaller{\bot}}$ corresponds to the row space $\operatorname{rowsp}\p{D'}$.

To ensure $\operatorname{rowsp}\p{D'}\cap [0, \infty)^p\neq \set{0}$, we apply a similar recursive approach. As before, either $\rank\p{D'}=1$ or $\operatorname{rowsp}\p{D'}\cap [0, \infty)^{i-1}\cap\set{0}\cap[0, \infty)^{p-i}\neq \set{0}$ for at least one of $i=1,\ELPS,p$. In the second case, we proceed recursively to each face, again avoiding rechecks where possible, continuing until some nonzero intersection is found. In the case of $\rank\p{D'}=1$, assume $\operatorname{rowsp}\p{D'}$ is not entirely contained in any of the boundary faces. Then $\operatorname{rowsp}\p{D'}\cap [0, \infty)^p\neq \set{0}$ for almost every $X$ only if $D'\geq 0$ elementwise. The set of $X$ for which any nontrivial components of $D'$ are $0$ constitutes a non-trivial variety, and thus has measure $0$. For our purposes, this set can be ignored.

Note: recursion in our described fashion above continues until arriving at the $1$-dimensional base cases, of which many are possible. The nonnegativity condition holding for one of these given a specific $X$ is enough to ensure that the initialized $\operatorname{rowsp}\p{D'}\cap [0, \infty)^p\neq \set{0}$. However, it need not be the same base case satisfying the condition for all $X$. Thus, requiring that one base case satisfy nonnegativity for all $X$ is a sufficient condition, but not necessary to ensure that $\operatorname{rowsp}\p{D'}\cap [0, \infty)^p\neq \set{0}$ for almost every $X$. Checks of this kind only give an upper bound for $\rho^*\p{\mathcal{G}}$, unless equalling the initializing lower-bound given previously. The computational complexity of the non-negativity checks can be improved by using a stronger condition, for example checking whether the polynomial elements can be written as sums-of-squares. However, the resultant upper-bound will be larger as a result, thus giving an accuracy-complexity tradeoff.

Note that when $\KSYMG{X}=\set{0}$, as for ranks $r$ greater than or equal to the generic completion rank $\ell\p{G}$, the starting matrix $D'$ is full rank, in which case all later checks succeed, regardless of the specific non-negativity criterion used. With this in mind, the computational upper bound $\bar{r}\p{\mathcal{G}}\leq \ell\p{\mathcal G}$ improves upon this existing result.

\section{Computing the generic completion rank} \label{section::algorithm}

To compute $\ell\p{\mathcal{G}}$, from Lemma~\ref{lemma::general} we must characterize $\KDGP{A}$. For $\RANK\p{A} \eqqcolon r$, assume $\set{\gamma_{1}, \ELPS, \gamma_{r}}$ is such that $Ax=0\iff \gamma_i^Tx=0\ \forall i=1,\ELPS,r$. Let $A_{ij}$ denote the matrix whose $i$-th row equals $\gamma_j$, $0$ otherwise. We have $\mybrk \bigcap_{i=1}^{p}\bigcap_{j=1}^{r}\set{\Omega\in\mathcal{M}\colon\TR\p{A_{ij}\Omega}=0}=K_{A}$. This translates to a vectorized system of linear equations. Furthermore, let $B_{ij}$ denote the matrix such that $\p*{B_{ij}}_{ij} = \smash{-\p{B_{ij}}}_{ji} = 1$, $0$ otherwise. Finally, let $C_{ij}$ be the matrix such that $\p{C_{ij}}_{ij}=0$, $0$ otherwise. We have $\p{\bigcap_{\set{i,j}\in\mathcal{G}}\set{\Omega\in\mathcal M\colon \TR\p{B_{ij}\Omega}=0}}\cap\mybrk\p{\bigcap_{\set{i,j}\notin\mathcal{G}}\set{\Omega\in\mathcal M\colon \TR\p{C_{ij}\Omega}=0}}=\SYMG$. The intersection $\KSYMG{A}$ can then be written as the joint intersection $\p{\bigcap_{i=1}^{p}\bigcap_{j=1}^{r}\set{\Omega\in\mathcal{M}\colon\TR\p{A_{ij}\Omega}=0}}\cap\mybrk\p{\bigcap_{\set{i,j}\in\mathcal{G}}\set{\Omega\in\mathcal M\colon \TR\p{B_{ij}\Omega}=0}}\cap\p{\bigcap_{\set{i,j}\notin\mathcal{G}}\set{\Omega\in\mathcal M\colon \TR\p{C_{ij}\Omega}=0}}$. This set can be represented as the kernel of a large sparse matrix $\brk{\mathcal{A}_1^{\mathsmaller{\top}}, \ELPS, \mathcal{A}_r^{\mathsmaller{\top}}, \mathcal {B}_{\mathcal{G}}^{\mathsmaller{\top}}, \mathcal{C}_{\mathcal{G}}^{\mathsmaller{\top}}}^{\mathsmaller{\top}}$, with each block associated with the relevant solution set above. Then to find the smallest value of $r$ such that $\DIM\p{\KSYMG{A}}=0$, we need only successively add blocks $\mathcal{A}_i$ until the corresponding matrix is full-rank. To implement this, standard algorithms apply, e.g. Gaussian elimination. Let ``\textsc{Reduce}'' denote a chosen basis reduction algorithm. Note that equations in $\mathcal{C}_{\mathcal{G}}$ stemming from the $C_{ij}$ can be essentially ignored for implementation, as can any reduced equations without non-pivotal elements. The corresponding pivotal columns can simply be ignored without further computation, and the target rank changed to account.

Generally, $\DIM\p{\KSYMG{A}}$ is not constant, instead varying with $\gamma_{1}, \ELPS, \gamma_{r}$. However, this dimension is constant almost everywhere. The sets above can interpreted as solving a set of polynomial-valued equations in the components of $\gamma_{1}, \ELPS, \gamma_{p-r}$. Under this polynomial interpretation, $R^*\coloneqq\DIM\p{K_{A}\cap\SYMG}$ is a deterministic function of $r$ and $\mathcal{G}$. The set of specific $\gamma_{1}, \ELPS, \gamma_{p-r}$ for which $\DIM\p{\KSYMG{A}}>R^*$ constitutes a nontrivial algebraic variety, and thus has measure zero.

As seen in the proof of Theorem~\ref{theorem::boundsupper}, the generic completion rank $\ell$ equals the smallest value $r = \RANK\p{A}$ such that $R^*=p^2$ (alternatively $R^*=p+2\cdot\#\set{E\p{\mathcal G}}$ after eliminating $\mathcal{C}_{\mathcal{G}}$). Implemented using polynomial arithmetic with rational coefficients, $\ell$ is exactly computable in this fashion. A sparse implementation is recommended due to the large volume of zeros in the equation sets. A formal description of this computational algorithm is provided in Algorithm~\ref{algo::upper}.\\

\begin{algorithm}[H]
    \DontPrintSemicolon
    \KwIn{A graph $\mathcal{G}$}
    \KwOut{$\ell\p{\mathcal{G}}$}
    $\mathcal{C} \gets \mathcal{B}_{\mathcal{G}}$\;
    \For{$r \gets 1$ \textbf{to} p-1}{
        $\mathcal{C} \gets$ {\sc Reduce}${\mskip 1mu}\p{\brk{\mathcal{C}^{\mathsmaller{\top}}, \mathcal{A}_r^{\mathsmaller{\top}}}^{\mathsmaller{\top}}}$\;
        \If{$\rank\p{\mathcal{C}}=p+2\cdot\#\set{E\p{\mathcal{G}}}$}{
            \Return{r}\;
        }
    }
    \Return{p};
    \caption{the generic completion rank of a general graph}
    \label{algo::upper}
\end{algorithm}

\section{Numerical results} \label{section::numerical}

To supplement the theoretical results established in Section~\ref{section::ranks}, a simulation study was performed to investigate the relative existence properties of the \textsc{concord}/\textsc{conspace} and Gaussian \textsc{ml} estimators outside of the deterministic regime (i.e. the ``probabilistic regime''). Shown in Figures~\ref{fig::prob1} and \ref{fig::prob2} are the probabilities of existence and uniqueness for both methods over several example graphs, based on $\mathcal{N}\p{0, I_p}$ data. Relative probabilities are consistent with our general theoretical results from Section~\ref{section::general}, with uniformly lower probabilities expected for the pseudo-likelihood approach. This follows from Lemmas~\ref{lemma::general} and \ref{lemma::general_2} which show that the existence of a unique minimizer for the pseudo-likelihood objectives implies the existence of the Gaussian \textsc{mle}.

\begin{figure}
    \begin{center}
    \includegraphics[width=\textwidth]{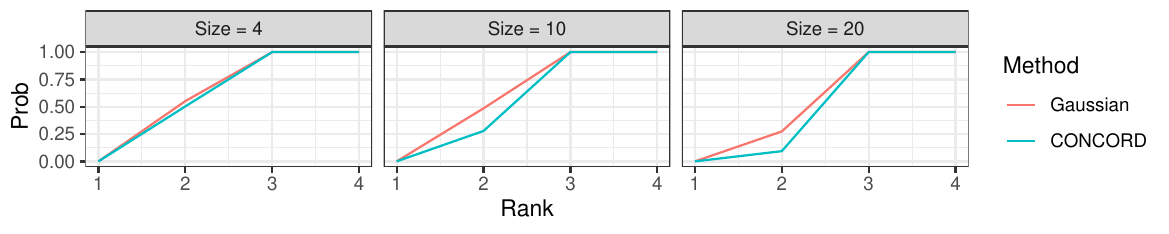}
    \end{center}
    \caption{Probability of existence of the Gaussian and \textsc{concord}-\textsc{conspace} estimators when the underlying graph is a cycle. ``Size'' indicates the number of variables, or equivalently the number of vertices in the graph.}
    \label{fig::prob1}
\end{figure}

\begin{figure}
    \begin{center}
    \includegraphics[width=\textwidth]{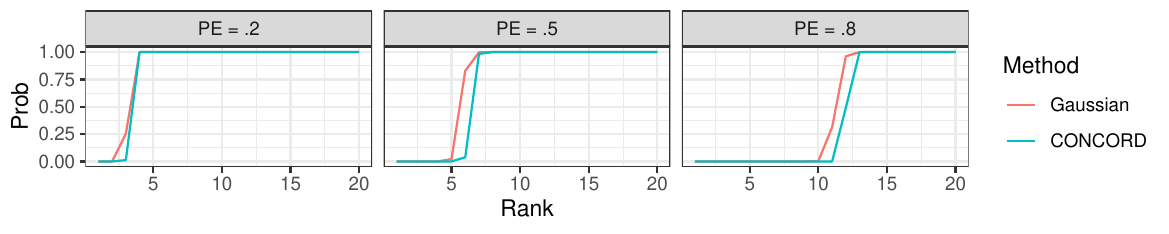}
    \end{center}
    \caption{Probability of existence of the Gaussian and \textsc{concord}/\textsc{conspace} estimators when the underlying graphs are Erd\H{o}s-R\'{e}nyi of size $20$ with edge probability $\text{PE}$, conditioned so that each graph contains a single connected component. Left: $\text{PE}=0.3$, Middle: $\text{PE}=0.5$, Right: $\text{PE}=0.8$.}
    \label{fig::prob2}
\end{figure}

\end{document}